\newtheorem{theorem}{Theorem}[section]
\newtheorem{corollary}[theorem]{Corollary}
\newtheorem{proposition}[theorem]{Proposition}
\newtheorem{definition}[theorem]{Definition}
\newtheorem{remark}[theorem]{Remark}
\newtheorem{example}[theorem]{Example}
\newtheorem*{acknowledgements}{Acknowledgements}
\numberwithin{equation}{section}
\begin{document}

\title{Musical Systems with $\mathbb{Z}_n$ - Cayley Graphs }
\author{Gabriel Picioroaga}
\address{
[Gabriel Picioroaga] University of South Dakota\\
Department of Mathematical Sciences\\
414 E. Clark Street\\
Vermillion, SD, 57069\\
U.S.A\\}\email{Gabriel.Picioroaga@usd.edu}

\author{Olivia Roberts}
\address{
[Olivia Roberts] University of South Dakota\\
Department of Mathematical Sciences\\
414 E. Clark Street\\
Vermillion, SD, 57069\\
U.S.A\\}\email{Olivia.K.Roberts@coyotes.usd.edu}

\thanks{}
\subjclass[2010]{00A65, 13C99, 05A99}

\keywords{Cayley graph, finitely generated group, chords, scales, circle of fifths, counterpoint}

\begin{abstract} We apply geometric group theory to study and interpret known concepts from Western music. We show that chords, the circle of fifths, scales and the first species of counterpoint are encoded in the Cayley graph of the group $\mathbb{Z}_{12}$, generated by $3$ and $4$. Using $\mathbb{Z}_{12}$ as a model, we extend the above music concepts to a particular class of groups $\mathbb{Z}_{n}$, which displays geometric and algebraic features similar to $\mathbb{Z}_{12}$. We identify a weaker form of counterpoint which, in particular leads to Fux's dichotomy in $\mathbb{Z}_{12}$, and to consonant sets in $\mathbb{Z}_n$. Using Maple software, we implement these new constructions and show how to experiment with them musically.

\end{abstract}

\maketitle

\tableofcontents

\section{Introduction}

In this paper we focus on notes, chords, scales and a few rules that govern harmony such as the circle of fifths and certain aspects of counterpoint. Our aim is to present a unified and self-contained approach to these musical concepts and constructions, from the perspective of the so-called Cayley graphs. 
There are many notions of distance that are used in music theory, for example see \cite{Ty} or \cite{Vi} and references therein. By taking the viewpoint of the Cayley graph distance, we are able to keep chords, circle of fifths, scales, and consonant/dissonant dichotomies under the same framework. Generalization then becomes a natural mathematical step, especially because we maintain the characteristics of the framework. The Cayley graphs can be thought of as geometric objects associated to abstract groups. The connection with Western music comes through the cyclic group $\mathbb{Z}_{12}$, and more generally groups $\mathbb{Z}_n$,  where $n$ is a product of two relatively prime numbers. Specifically, we show that the oriented Cayley graph generated by the elements $3$ and $4$ encodes most of the chords used in Western classical music, the circle of fifths and the major/minor scales. By dropping the arrows, thus working in the unoriented (undirected) Cayley graph, one explains the first species of counterpoint through the action of affine  transformations (as in \cite{Ma}). We follow up with these ideas to the case $n=p\cdot q$ with relatively prime integers $p$ and $q$. We define chords, scales, and the circle of fifths in this general setting, which we call {\it{musical system}} when coupled with equal temperament tuning. We show that a weak form of counterpoint is always available, and provide examples where it can be extended toward full-fledged counterpoint partition.    \\
The paper is structured as follows: in Section \ref{term} we describe the notation we use for most math and music symbols. Other terms are described later when needed. Section \ref{groups} contains group theory definitions and examples together with geometric properties of Cayley graphs. We present conditions  under which automorphisms are isometries (Theorem \ref{iso}), and provide concrete examples. For the purpose of formalizing first species counterpoint, we define affine transformations as left translations of group automorphisms. In Section \ref{westmusic} we show how the oriented Cayley graph of $\mathbb{Z}_{12}$, generated by $3$ and $4$ helps define chords (path walks on the graph), the circle of fifths, and scale construction. 
The unoriented graph, obtained from the symmetric set $\{3,4, 8, 9\}$ is used to understand first species counterpoint. The formalism in the definition of counterpoint is inspired and taken from \cite{AJM} and \cite{Ma}, as is Theorem \ref{cpt12}. We prove Theorem \ref{cpt12} with the help of the machinery from Section \ref{groups}. We use the term {\it{affine}} for transformations on a Cayley graph, and we need only a group structure for it (see Definition \ref{aff} iii), also \cite{Loh} and \cite{Cai}). 
Let us mention that in \cite{AJM}, the group of affine transformations over $\mathbb{Z}_n$ that preserve the Cayley metric, is denoted by $\overrightarrow{\text{GL}}(\mathbb{Z}_n)$, and takes into consideration the ring structure as well in order to capture the double ``life" of a $k\in\mathbb{Z}_n$ as a note and as an interval. Inspired by \cite{Mco}, the first species counterpoint is studied in detail in \cite{AAL}. In these works, a multitude of {\it{strong dichotomies}} (we call these {\it{counterpoint partitions}}) are found with respect to unique affine transformations  on the ring.  In essence, by considering  only the group structure, coupled with suitable  Cayley graphs we found that we can still recover strong dichotomies, although not all of those from the $6$ affine equivalent classes in \cite{AJM}. Such reduction may be seen as a drawback, however it  better singles out the Fux dichotomy as a minimizer of Cayley graph distances. Another advantage is that it allows for a weak form of counterpoint partitions to be extended in case when $n$ is odd (a strong dichotomy existence implies $n$ even). Our process eliminates many dichotomies because of the following requirement, which is convenient to consider for Western music: the symmetric set of the group's generators must be consonant. Then we ask compatibility with an affine isometry $T$ (with respect to the Cayley graph) and idempotent, i.e. $T^2=Id$ (properties we call the {\it{weak counterpoint condition}}). These lead us to isolate the Fux dichotomy among four possible counterpoint partitions  (see Theorem \ref{4p}). We also observe that minimizing further on the path lengths of the oriented Cayley graph, the only possible counterpoint partition is Fux's. Theorem \ref{last} in Section \ref{systems} shows that under mild restrictions a weak from of counterpoint is available in our setting. Based on it we show a few examples which lead to counterpoint partitions ($n=10$ and $n=12$) or maximal consonant sets (n=15). These examples suggest that for $n=pq$ and gcd$\{p,q\}=1$, the weak counterpoint condition implies the existence of a maximal consonant/dissonant pair of subsets of $\mathbb{Z}_n$, each of cardinal $[n/2]$. We will study such results in a future work. \\
In Section \ref{systems} we also define chords, scales and a generalized circle of fifths concept, inspired by the $\mathbb{Z}_{12}$ case, and compatible with the Cayley graph framework. Generalized circle of fifths (based on relatively primes which generate the cyclic group $\mathbb{Z}_n$) and scales (almost equidistant partitions of an octave) already occurred  in \cite{CM}). Although generalized chords were missing from \cite{CM}, later work supplied the addition, see for example \cite{Baz}, \cite{Gou} and \cite{AKK} and references therein. In \cite{AKK} for example, the connection between triads, the fifth (which occupies a prominent role in Clough and Myerson construction), and the Cayley graph of $\mathbb{Z}_{12}$ generated by $\{3,4,7\}$ is clearly spelled out. The general system in these studies takes the generalized fifth $2k+1$, the value $p:=k+1$ and the value $n:=4k$ (number of microtones) as the backbone of the microtonal system. While this choice clearly generalizes $\mathbb{Z}_{12}$ for $k=3$, it becomes incompatible with a factorization $n=pq$ unless $k=3$. It would be interesting to find if an analogue of first species counterpoint can be defined in this setting.  In our considerations, the generalized fifth comes out naturally after imposing $n$ be the product of two coprimes $p$ and $q$, its value being $p+q$. Hence, the unifying character of the generalization (chords, circles of fifths, scales, and first species counterpoint) is represented by the group generators $p$ and $q$ and the corresponding Cayley graph. 
As application we have written Maple code to experiment with the sound of chords, scales, the circle of fifths, and counterpoint in various musical systems $(\mathbb{Z}_n, s)$, where $s$ encodes the total frequency length (octave for $s=2$). The tuning we consider is the equal temperament one, i.e. an equidistant division of the frequency space. In Section \ref{sound} we describe how to implement it, together with notes and chords in Maple. Our Maple code is available to the interested reader by email request.

\section{Terminology and Notation}\label{term} 
\noindent We describe some basic symbols and notation from math and music used throughout the paper. Most of the time we use capital letters to denote sets except the empty set which is denoted by $\emptyset$. If $A$ and $B$ are two sets, then by $A\setminus B$ we denote the difference set. For a set $A$ the number of elements in $A$ is denoted $|A|$. By $\mathbb{N}$ we denote the set of positive integers. The integers are denoted by $\mathbb{Z}$, and in music it represents the discrete pitch class space. If $n\in\mathbb{N}$ and $n\geq 2$, the finite set $\mathbb{Z}_n=\{0,1,..., n-1\}$ represents both the set of remainders obtained by division by $n$ and the class of integers modulo $n$. For $a$ and $n\geq 2$ in $\mathbb{N}$ the number $a\text{ mod }n \in \mathbb{Z}_n$ denotes the reminder of the division of $a$ by $n$.
The greatest common divisor of $p\in\mathbb{Z}$ and $q\in\mathbb{Z}$ is denoted by gcd$\{p,q\}$. The elements of $\mathbb{Z}_{12}$ are put in one-to-one correspondence with the symbols $C$, $C \sh /D \fl$, $D$, $D\sh /E \fl$, $E$, $F$, $F \sh /G \fl$, $G$, $G\sh /A \fl$, $A$, $A \sh /B \fl$, $B$ that form the notes of the chromatic scale. 
\vspace{0.2in}
\begin{center}
\begin{tabular}{ |c|c|c|c|c|c|c|c|c|c|c|c | } 
 \hline
 0 & 1 & 2 & 3 & 4 & 5 & 6 & 7  & 8 & 9 & 10 & 11 \\ 
 \hline
$C$ & $C \sh /D \fl$ & $D$ & $D\sh /E \fl$ &  $E$ & $F$ & $F \sh /G \fl$ & $G$ & $G\sh /A \fl$ & $A$ & $A \sh /B \fl$ & $B$ \\
 \hline
\end{tabular}
\end{center}
\vspace{0.2in}
To avoid confusions with classic musical interval definitions (third, fifth etc.), we mention that we also denote as $\mathbb{Z}_{12}$ the set of intervals, and more generally by $\mathbb{Z}_n$. Any peril of confusion with pitch classes is eliminated by the context we will work in. Music intervals and their associated frequency ratios will be explained algebraically in the later section related to tuning. We caution the reader about the concept ``distance". Above, it is used in the usual sense as distance between two real numbers (or ``counting distance" when referred to integers). We will make sure to distinguish it from ``distance" between two vertices on a graph, even though the vertices are represented by integers. 

\section{Groups, Generators and Cayley Graphs}\label{groups}

\noindent In this section, we mention some results from group theory that we need in the sequel. For an elementary introduction in group theory we refer to \cite{Ga} and \cite{TJ}.

\begin{definition} Let $(G, \star)$ be a group with its identity element denoted by $e$. A finite set $S:=\{g_1, g_2, ..., g_n\}\subset G$ is a set of generators if $g_i\neq e$, for all $i$, and for any $g \in G$ there exists $g_{i_1}, \text{ } g_{i_2} , ... , \text{ } g_{i_k} \in S$ such that $g=g_{i_1}^{\varepsilon_1} \star g_{i_2}^{\varepsilon_2} \star ... \star g_{i_k}^{\varepsilon_k}$ where $\varepsilon_j = \pm1$. 
We say that $G$ is finitely generated and write $G=\langle S, S^{-1}\rangle$, where $S^{-1}= \{g_1^{-1},g_2^{-1},...,g_n^{-1} \}$. A set of generators is minimal if when removing an element form it, the resulting set does not generate $G$.
\end{definition}
\begin{remark}
A set $S$ generates $G$ if any element of $G$ can be written as a word over the ``alphabet" $S\cup S^{-1}$. One can rewrite a word in its reduced form, i.e. in the definition above, if some  $i_j=i_{j+1}$ then $\varepsilon_j = \varepsilon_{j+1}$.      
\end{remark}

\noindent Group theory also deals with infinitely generated groups, however we do not need it in our considerations. Notice that a group can be infinite as a set and be finitely generated. Any element of the group can be written as a finite product of some of the generators. We will see below examples of groups that admit different sets of generators. Let us highlight the following property: in a finite group one can find a set of generators $S$ such that for every $g\in S$ either $g^{-1}\notin S$ or $g=g^{-1}$, i.e. $S$ consists of 'positive' generators. This follows from the fact that in a finite group every element must have a finite order, e.g. see \cite{Ga}. 

\begin{example} Let $n\in\mathbb{N}$, $n\geq 2$. The pair $(\mathbb{Z}_n, \oplus)$ forms an abelian group with $a\oplus b:= (a+b)\text{ mod }n$ for $a$, $b$ in $\mathbb{Z}_n$. The inverse of $a$ is $\ominus a := n-a$. We will use the notation $a\ominus b$ for $a\oplus (n-b)$. The following sets generate $\mathbb{Z}_n$:  
$S=\{1\}$ and 
$S=\{ k\}$ for $k\in\mathbb{Z}_n$ with gcd$\{k,n \}=1$ (i.e. $\mathbb{Z}_n$ is cyclic). There can be more diverse sets of generators; if $n$ is a product $p_1p_2...p_k$ with gcd$\{p_i, p_j \}=1$ for all $i\neq j$, then the set $S:=\{p_1, p_2, ..., p_k \}$ generates $\mathbb{Z}_n$. 
\end{example}
\begin{example}
Similar to mod $n$ addition on integers, there is the mod $n$ multiplication. For $a, b\in\mathbb{Z}_n$, define 
  $a\odot b:= (ab)\text{ mod }n$. The distributive properties of $\odot$ with respect to $\oplus$ work similarly as in the case of integer multiplication and addition. Also $a\odot b\oplus c=ab\oplus c=(ab+c)\text{ mod }n$ for all $a,b,c\in\mathbb{Z}_n$. The following set $U(n):=\{ k\in\mathbb{Z}_n\text{ }|\text{ } \text{ gcd}(k,n)=1 \}$ is called the set of units of $\mathbb{Z}_n$, and plays a crucial role in the sequel. 
While $\odot$ is a binary operation in $\mathbb{Z}_n$, the invertibility property may be violated. However, restricting  $\odot$ to $U(n)$ will do; the pair $(U(n), \odot)$ is an abelian group.      
\end{example}
\noindent From here on, in an arbitrary group $G$, the binary operation will be denoted using multiplicative notation, i.e. instead of $g\star h$ we write $gh$. In particular cases, we will use the classic established notations without peril of confusion.    
\begin{definition} Let $G$ be a group generated by a set $S$ of positive generators (i.e. if $w\in S$ then $w^{-1}\notin S$ unless $w=w^{-1}$). The oriented Cayley graph of $G$ associated to this set of generators is defined as the pair $(V,E)$ where $V=\{g \text{ }| \text{ }g\in G\}$ is the set of vertices and $E \subset V \times V$ is the set of edges: $(g,h) \in E$ if and only if there exists $w\in S$ such that $h = g w$. The edge $(g,h)$ is labelled $w$, has source vertex $g$ and points toward vertex $h$.
\end{definition}
\noindent We will use arrows to show the edge $(g,h)$ as a path $g\to h$ on the oriented graph. If we remove the arrows, the graph obtained will be called unoriented Cayley graph. In some textbooks, e.g. \cite{Ga}, the oriented Cayley graph from above definition is called Cayley digraph. Prior to dropping arrows, one can define the unoriented Cayley graph concept similarly, assuming the generating set $S$ is symmetric, i.e. $S=S^{-1}$. We do not need the symmetry condition when the oriented Cayley graph is considered, owing to the property on finite groups mentioned above (and we can pass to a minimal generating set). The reason we need both concepts is that we will ``walk" on minimal paths in both (un)oriented graphs, and the result may not be the same. 

\begin{example} From the example above, we can write $Z_6=\langle 2,3 \rangle$ and $Z_{12}=\langle 3,4 \rangle$. Their corresponding oriented Cayley graphs are depicted in Figures 1 and 2. We know that $\mathbb{Z}_n$ is cyclic, hence one can draw a very simple oriented Cayley graph with $n$ vertices placed on a circle with consecutive arrows (loop). Also, $Z_6=\langle 4,3 \rangle$, i.e. the inverses of $2$ and $3$ generate $\mathbb{Z}_6$. For these generators, the Cayley graph is similar with the one in Figure 1,  except the horizontal edges where the arrows are reversed and labeled $4$. One can treat $Z_{12}=\langle 9,8\rangle =\langle 3,8 \rangle= \langle 4,9\rangle $ similarly to obtain different oriented Cayley graphs.   
\end{example}

\begin{definition}\label{d} 
i) A path (between) from $g$ to (and) $h$ of length $k$ in the (un)oriented Cayley graph is a set of vertices $\{x_0,x_1,...,x_{k-1}, x_{k} \}$ such that $x_0=g$, $x_{k}=h$ and $\forall$ $i=0,..,k-1$ the pair  $(x_i, x_{i+1})$ is an edge, i.e. $\exists$ $w_i$ generator such that $x_{i+1}=x_i w_i$. If the oriented version is considered we write $x_0\to x_1\to...\to x_{k} $.\\
ii) The function $d: G\times G\to [0,\infty)$ defined by $d(g,h)=0$ if $g=h$ and
$$
d(g,h):=\text{ min }\{ k\in\mathbb{N} \text{ }|\text{  there exists an unoriented path of length }k\text{ from }g\text{ to }h   \}\text{ if }g\neq h
$$
is called distance.
\end{definition}
\begin{remark} The distance function is a metric on the unoriented Cayley graph, i.e. $d$ is symmetric, $d(g,h)=d(h,g)$, non-negative, and satisfies the triangle inequality, $d(g,h)\leq d(g, w)+d(w,h)$, $\forall$ $g,h,w \in G$. The metric properties do not hold if the minimum above is taken over oriented paths only. For example, even the symmetry property is broken; in Figure 1 one can see that from vertex $1$ to $3$ we have an oriented path of length $1$, whereas if we measure from $3$ to $1$ the length is $2$. To eliminate any confusion, wherever we use ``distance" between vertices, then the unoriented Cayley graph with its metric are used.    
\end{remark}

\tikzset{
    to/.style={
        ->,
        thick,
        shorten <= 1pt,
        shorten >= 1pt,},
    from/.style={
        <-,
        thick,
        shorten <= 1pt,
        shorten >= 1pt,}
}

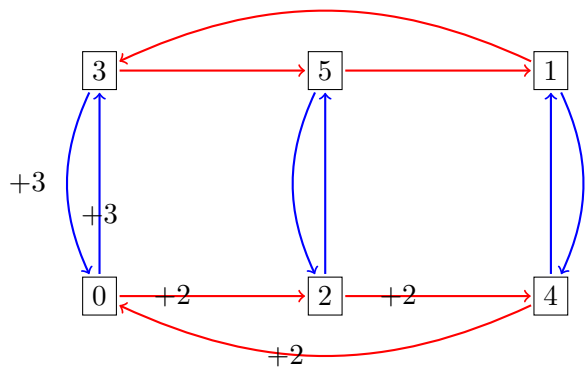
\begin{figure}\label{zsix}
  \begin{center}
      \begin{tikzpicture}[scale=1.5]
           
            \node[shape=rectangle, draw=black] (0) at (0,0) {$0$};
            \node[shape=rectangle, draw=black] (2) at (2,0) {$2$};
            \node[shape=rectangle, draw=black] (4) at (4,0) {$4$};

            \node[shape=rectangle, draw=black] (3) at (0,2) {$3$};
            
            \node[shape=rectangle, draw=black] (5) at (2,2) {$5$};
            \node[shape=rectangle, draw=black] (1) at (4,2) {$1$}; 

            \path (0) edge[to, draw=blue] node[label=below:{+3}] {} (3);
            \path (2) edge[to, draw=blue] node {} (5);
            \path (4) edge[to, draw=blue] node {} (1);

            \path (0) edge[to, draw=red] node[label=left:{+2}] {} (2);
            \path (2) edge[to, draw=red] node[label=left:{+2}] {} (4);
            \path (3) edge[to, draw=red] node {} (5);
            \path (5) edge[to, draw=red] node {} (1);
            
            \path (1) edge[to, bend right=25, draw=red] node {} (3);
            \path (4) edge[to, bend left=25, draw=red] node[label=left:{+2}] {} (0);
            \path (1) edge[to, bend left=25, draw=blue] node {} (4);
            \path (5) edge[to, bend right=25, draw=blue] node {} (2);
            \path (3) edge[to, bend right=25, draw=blue] node[label=left:{+3}] {} (0);

        \end{tikzpicture}
        \caption{ $\mathbb{Z}_{6}= \langle 2, 3\rangle$  oriented Cayley graph }
  \end{center}
\end{figure}

   \begin{figure}\label{ztw}  
 \begin{center}
        \begin{tikzpicture}[scale=1]

            \node[shape=rectangle, draw=black] (0) at (0,0) {$0$};
            \node[shape=rectangle, draw=black] (4) at (0,2) {$4$};
            \node[shape=rectangle, draw=black] (8) at (0,4) {$8$};

            \node[shape=rectangle, draw=black] (3) at (2,0) {$3$};
            \node[shape=rectangle, draw=black] (6) at (4,0) {$6$};
            \node[shape=rectangle, draw=black] (9) at (6,0) {$9$};
            
            \node[shape=rectangle, draw=black] (7) at (2,2) {$7$};
            \node[shape=rectangle, draw=black] (11) at (2,4) {$11$}; 
            \node[shape=rectangle, draw=black] (10) at (4,2) {$10$}; 
            \node[shape=rectangle, draw=black] (1) at (6,2) {$1$}; 
            \node[shape=rectangle, draw=black] (2) at (4,4) {$2$}; 
            \node[shape=rectangle, draw=black] (5) at (6,4) {$5$}; 
            
            \path (0) edge[to, draw=blue] node[label=below:{+4}] {} (4);
            \path (4) edge[to, draw=blue] node[label=below:{+4}] {} (8);
            \path (8) edge[to, bend right=25, draw=blue] node[label=below:{+4}] {} (0);
            \path (3) edge[to, draw=blue] node {} (7);
            \path (7) edge[to, draw=blue] node {} (11);
            \path (6) edge[to, draw=blue] node {} (10);
            \path (10) edge[to, draw=blue] node {} (2);
            \path (9) edge[to, draw=blue] node {} (1);
            \path (1) edge[to, draw=blue] node {} (5);
            \path (11) edge[to, bend right=25, draw=blue] node {} (3);
            \path (2) edge[to, bend left=25, draw=blue] node {} (6);
            \path (5) edge[to, bend left=25, draw=blue] node {} (9);

            \path (0) edge[to, draw=red] node[label=left:{+3}] {} (3);
            \path (3) edge[to, draw=red] node[label=left:{+3}] {} (6);
            \path (6) edge[to, draw=red] node[label=left:{+3}] {} (9);
            \path (9) edge[to, bend left=25, draw=red] node[label=left:{+3}] {} (0); 
            \path (4) edge[to, draw=red] node {} (7);
            \path (7) edge[to, draw=red] node {} (10);
            \path (10) edge[to, draw=red] node {} (1);
            \path (8) edge[to, draw=red] node {} (11);
            \path (11) edge[to, draw=red] node {} (2);
            \path (2) edge[to, draw=red] node {} (5);
            \path (5) edge[to, bend right=25, draw=red] node {} (8);
            \path (1) edge[to, bend right=25, draw=red] node {} (4);

        \end{tikzpicture}
        \caption{ $\mathbb{Z}_{12}= \langle 3, 4\rangle$  oriented Cayley graph }
   \end{center}
\end{figure}
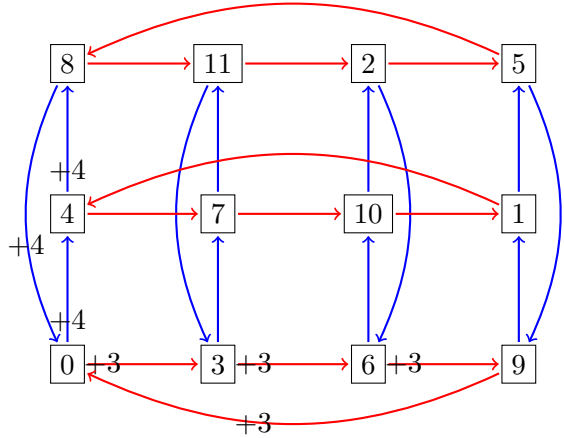

\noindent Let us highlight another important property of the metric $d$. If $w\in S\cup S^{-1}$ then $d(g,gw)=1$, $\forall$ $g\in G$. This entails that $d$ is left-translation invariant: $d(g, h)=d(\omega g, \omega h)$, $\forall$ $\omega \in G$. This holds because $g^{-1}h=(\omega g)^{-1} \omega h$. We will focus later on an interesting connection between certain functions that preserve the Cayley graph distance and the music composition technique counterpoint. For this reason we need to mention a few more group theory concepts and results. 
\begin{definition}
 Let $G$ and $H$ be a two groups. A function $f:G\to H$ is called (group) morphism if $f(g_1g_2)=f(g_1)f(g_2)$, $\forall$ $g_{1,2}\in G$. If moreover $f$ is bijective, then $f$ is called isomorphism, and we say that the groups $G$ and $H$ are isomorphic. An isomorphism $f: G\to G$ is called an automorphism of $G$.    
\end{definition}

\begin{definition}\label{aff} Let $G$ be a group generated by a finite, symmetric set, $S=S^{-1}$. \\
i) A function $f: G\to G$ is an isometry on the unoriented Cayley graph (of $G$ with respect to $S$) if the following identity holds 
\begin{equation}\label{e1}
d(x,y)=d(f(x), f(y)), \quad \forall x,y\in G
\end{equation}
where $d$ is the distance from Definition \ref{d} ii).\\
ii) A function $\varphi: G\to G$ is a right-translation if $\exists$ $w\in G$ such that
\begin{equation}\label{e2}
\varphi(g)= gw, \quad \forall g\in G
\end{equation}
Left-translation is defined similarly. In the case that $G$ is abelian, the two concepts coincide.\\
iii) A (left or right) translation of an automorphism of $G$ is called affine transformation. 
\end{definition}
\noindent We do not expand here on the graph isomorphism concept, though in essence this is what we obtain, indirectly in the next theorem  about a group automorphism which preserves the generating set. The  rigidity question, i.e. under what conditions is a graph isomorphism an affine transformation, is important in geometric group theory. Nevertheless, this topic is beyond our goals. For more details, the interested reader may consult e.g. \cite{Loh} and references therein such as \cite{Cai}. 
        
\begin{remark}
  The set of all automorphisms of a group $G$ is denoted by Aut$(G)$. This set is also a group; the binary operation is the usual function composition $f_1\circ f_2 (g) = f_1(f_2(g))$, $\forall$ $f_{1,2}\in \text{Aut}(G)$, $\forall$ $g\in G$. Its identity element is the identity function $\text{Id}:G\to G$, $Id(g)=g$, $\forall$ $g\in G$; the inverse of $f$ is the usual inverse function $f^{-1}$, which exists because $f$ is bijective. One can check easily that any (left or right) translation is an isometry. Clearly, a translation is not a group morphism, unless $w=e$ in (\ref{e2}). Also, an isometry need not be a morphism, and vice-versa. We will provide below a sufficient condition for an automorphism to be an isometry.
\end{remark}

\begin{example}\label{aut} 
For $n=12$, $U(12)=\{ 1,5,7,11\}$. We do not need it here but one can check $\{ 5,7 \}$ is a minimal generating set for $U(12)$. More importantly, we need the following known Theorem (see \cite{Ga}): The groups Aut$(\mathbb{Z}_n)$ and $U(n)$ are isomorphic. As a consequence, Aut$(\mathbb{Z}_{12})= \{ f_1, f_2, f_3, f_4\} $ where $f_1(g)=Id (g)=g$, $\forall$ $g\in \mathbb{Z}_{12}$,
$f_2(g) = 5\odot g$, $\forall$ $g\in \mathbb{Z}_{12}$, 
$f_3(g) = 7\odot g$, $\forall$ $g\in \mathbb{Z}_{12}$,
$f_4(g) = 11\odot g$, $\forall$ $g\in \mathbb{Z}_{12}$. Note that for each $i=1,2,3,4$ we have $f_i^2=f_i\circ f_i=Id$. It is not obvious, though one may check by a tedious calculation, that each $f_i$ is an isometry on the unoriented Cayley graph of $\mathbb{Z}_{12}=\langle 3,4,8,9\rangle $. 
  \end{example}

\noindent The next result provides a simple criteria for checking whether a group automorphism is a Cayley graph isometry. The statement can be viewed as the easier counterpart of the rigidity problem mentioned above, and is considered somehow implicit in the literature of Cayley isomorphism graphs and groups, see e.g. \cite{Cai}. Because we will use it extensively for elements in Aut($\mathbb{Z}_n$), we formulate it as a theorem with complete proof.  

\begin{theorem}\label{iso} Let $G$ be a group, $e$ its identity element, and $f\in\text{Aut}(G)$. Suppose $S\subset G$ is a symmetric, generating set for $G$. The following are equivalent: 
\\
i) $f(S)=S$.\\
ii) $f$ is an isometry on the unoriented Cayley graph of $G$ with respect to $S$.
\end{theorem}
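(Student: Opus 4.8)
The plan is to prove both implications of the equivalence, exploiting the left-translation invariance of $d$ already established in the excerpt, namely $d(g,h)=d(\omega g,\omega h)$ for all $\omega\in G$. The crucial reduction is that because $d$ is left-translation invariant and $f$ is a morphism fixing $e$ (since $f(e)=e$ for any automorphism), checking the isometry property $d(x,y)=d(f(x),f(y))$ for all $x,y$ reduces to checking $d(e,z)=d(e,f(z))$ for all $z\in G$. Indeed, $d(x,y)=d(e,x^{-1}y)$ and $d(f(x),f(y))=d(e,f(x)^{-1}f(y))=d(e,f(x^{-1}y))$, so setting $z=x^{-1}y$ the general statement collapses to comparing the distance from the identity to $z$ with the distance from the identity to $f(z)$. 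I would state this reduction first as the backbone of both directions.

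For the direction (i) $\Rightarrow$ (ii), I would argue that a minimal unoriented path from $e$ to $z$ of length $k$ corresponds to a word $z=w_1w_2\cdots w_k$ with each $w_i\in S$ (here $S=S^{-1}$ is symmetric, so we need not track signs separately). Applying $f$ and using the morphism property gives $f(z)=f(w_1)f(w_2)\cdots f(w_k)$, and the hypothesis $f(S)=S$ guarantees each $f(w_i)\in S$. Hence $f(z)$ is reachable from $e$ by a path of length $k$, so $d(e,f(z))\le d(e,z)$. The reverse inequality follows by the same argument applied to $f^{-1}$, which is also an automorphism satisfying $f^{-1}(S)=S$ (apply $f^{-1}$ to $f(S)=S$). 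Combining the two inequalities yields $d(e,f(z))=d(e,z)$, and by the reduction above, (ii) holds.

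For the direction (ii) $\Rightarrow$ (i), I would use the characterization that $w\in S$ precisely when $d(e,w)=1$ together with $w\neq e$; more carefully, the neighbors of $e$ in the unoriented Cayley graph are exactly the elements of $S$ (since an edge $(e,h)$ exists iff $h=ew=w$ for some generator $w\in S$). By the isometry hypothesis and the reduction, $d(e,f(w))=d(e,w)=1$ for every $w\in S$, so $f(w)$ is a neighbor of $e$, i.e. $f(w)\in S$. This gives $f(S)\subseteq S$. Because $f$ is a bijection and $S$ is finite, $f(S)\subseteq S$ forces $f(S)=S$, completing the proof.

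The main obstacle I anticipate is not conceptual but a point of rigor in the (ii) $\Rightarrow$ (i) step: I must ensure that the distance-one neighbors of $e$ are exactly $S$ and nothing more, which relies on the symmetry $S=S^{-1}$ and on $e\notin S$ (generators are nontrivial), so that no spurious element sneaks in at distance one. A secondary subtlety worth a remark is that in the (i) $\Rightarrow$ (ii) direction the word $z=w_1\cdots w_k$ realizing the minimal distance uses symmetric generators, so the morphism image is automatically a valid walk of the same length; one should note that minimality is preserved in both directions precisely because we can run the argument for $f$ and for $f^{-1}$ symmetrically, which is what upgrades the two inequalities into the desired equality.
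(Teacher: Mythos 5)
Your argument is correct, and every step you flag as a potential subtlety is handled adequately: the reduction to the basepoint $e$ via left-translation invariance is legitimate because $f(x)^{-1}f(y)=f(x^{-1}y)$, the identification of the distance-one neighbours of $e$ with $S$ uses exactly the symmetry $S=S^{-1}$ and $e\notin S$ as you say, and the finiteness of $S$ (built into the paper's definition of a generating set) justifies upgrading $f(S)\subseteq S$ to $f(S)=S$ by injectivity. The underlying mechanism is the same as the paper's --- the image under $f$ of a path labelled by generators is again a path of the same length when $f(S)=S$ --- but your packaging differs in three ways. First, the paper proves (i)$\Rightarrow$(ii) by induction on $m=d(x,y)$, obtaining the upper bound $d(f(x'),f(y'))\leq m+1$ from the image of a minimal path and then ruling out a strictly shorter path by a contradiction argument that pulls a hypothetical short path back through $f$; your two-inequality scheme, applying the ``words map to words'' estimate once to $f$ and once to $f^{-1}$, achieves the same equality without induction and without the contradiction, and is cleaner for it. Second, your explicit reduction to $d(e,z)=d(e,f(z))$ is not made in the paper, which carries general $x,y$ throughout. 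Third, for (ii)$\Rightarrow$(i) the paper also first shows $f(S)\subseteq S$ and then reruns the argument for $f^{-1}$ to get the reverse inclusion, whereas you close it with the cardinality observation; both are fine, though yours leans on finiteness of $S$ while the paper's works verbatim for infinite symmetric generating sets. What the paper's longer induction buys is that it stays entirely inside its own formal definition of a path and never needs the (easy but unstated) equivalence between paths from $e$ and factorizations $z=w_1\cdots w_k$ over $S$; if you write your version up, you should state that equivalence explicitly as a one-line lemma.
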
 
\begin{proof} 
i)$\Longrightarrow$ii)
For any  $x,y$ elements of $G$ we have $d(x,y)\in\mathbb{N}$.  We prove (\ref{e1}) by induction over $m:=d(x,y)$.  
If $m=1$ then $y=xw$ for some $w\in S$. Hence $x^{-1}y\in S$. By hypothesis, $f(x^{-1}y)\in S$. Because $f$ is a morphism, $f(x^{-1}y)=f(x)^{-1}f(y)$, therefore $f(x)^{-1}f(y)\in S$. This implies $d(e, f(x)^{-1}f(y)) =1 $. By left-invariance and morphism properties  $1=d(e, f(x)^{-1}f(y)) =  d(f(x), f(y))$, and the first step in induction is verified. We assume now (\ref{e1}) holds for a fixed $m\in\mathbb{N}$ and any $x, y $ with $d(x,y)=m$. Let $x', y'$ in $G$ with $d(x', y')=m+1$. We  want to show $d(f(x'), f(y'))=m+1$. Let $x':=x_0, x_1, ..., x_m, x_{m+1}=y'$ be a path of length $m+1$ in the unoriented Cayley graph with respect to $S$. Because this path realizes the minimum length between $x'$ and $y'$, we have that $d(x', x_m)=m$. By the induction hypothesis $d(f(x'), f(x_m))=m$. This implies that $f(x'), f(x_1),..., f(x_m)$ is a path of length precisely $m$, because $f$ is injective. From the path definition we know $x_m^{-1}y'\in S$, hence $f(x_m)^{-1}f(y')\in S$. If follows that $f(x'), f(x_1),..., f(x_m), f(y')$ must be a path of length $m+1$, hence $d(f(x'), f(y'))\leq m+1$, by the min condition on $d$. If, by contradiction $d(f(x'), f(y'))< m+1$ then there would be a path $f(x')=z_0, z_1,..., z_t=f(y')$ with $t<m+1$. Because $f$ is an automorphism, there exist $y_0=x',y_1,..., y_{t-1}, y_{t}=y'$ in $G$ such that  $f(y_i)=z_i$ for all $i=0,...,t$. The path condition implies  $f(y_{i+1})=f(y_i)s_i$ for some $s_i\in S$. Using $f(S)=S$ we have $\forall$ $i=0,..,t$ $\exists$ $w_i\in S$ such that $f(s_i)=w_i$. Then $f(y_{i+1})=f(y_i)f(w_i)=f(y_iw_i)$, which entails (because $f$ is injective) $y_{i+1}=y_i w_i$ for all $i=0,...,t$. Hence we would obtain that $x'=y_0,y_1,..., y_{t-1}, y_t=y'$ is a path between $x'$ and $y'$. Then by min condition in the definition of $d$, it would follow that $d(x', y')\leq t$ which contradicts $t<m+1$. In conclusion $d(f(x'), f(y'))=m+1$, and the induction step is completed. 
\\
ii)$\Longrightarrow$i) Let $s\in S$ arbitrary. Then $d(f(s), f(e))=d(s,e)=1$, hence $d(e, f(s))=1$. By the definition of $d$ it follows that $\exists$ $w\in S$ such that $f(s)=e w=w$, hence $f(s)\in S$. We obtain $f(S)\subseteq S$. Because $f$ is automorphism and preserves $d$, it follows that $f^{-1}$ is automorphism and preserves $d$. From the first part of the implication, now applied to $f^{-1}$,  we get $f^{-1}(S)\subseteq S$. This implies $S\subseteq f(S)$. In conclusion $f(S)=S$. 
\end{proof}
\begin{example}\label{isom}
  Let $\{f_i, i=1,2,3,4\}$ be the automorphisms from Example \ref{aut}. For $S$ the symmetric set $\{3,4,8,9\}$ we check $f_i(S)=S$ for all $i=1,2,3,4$. Obviously $f_1(S)=S$. We have:\\
 $\bullet$  $f_2(3)=3$, $f_2(4)=8$, $f_2(8)=4$, $f_2(9)=9$, hence $f_2(S)=S$.\\
 $\bullet$ $f_3(3)=9$, $f_3(4)=4$, $f_3(8)=8$, $f_2(9)=3$, hence $f_3(S)=S$.\\
 $\bullet$ $f_4(3)=9$, $f_4(4)=8$, $f_4(8)=4$, $f_4(9)=3$, hence $f_4(S)=S$.\\
 By Theorem \ref{iso} all $f_i$ are isometries on the unoriented Cayley graph of $\mathbb{Z}_{12} $ with respect to $S$. 
 The next example shows that automorphisms need not be isometries (with respect to a Cayley graph). We take the group $\mathbb{Z}_{10}$ and the symmetric generating set $S=\{2,5,8\}$. Because gcd$\{3,10\}=1 $ the function $f:\mathbb{Z}_{10}\to \mathbb{Z}_{10} $, $f(x)=3\odot x= 3x\text{ mod }10$ is an automorphism (by the result mentioned in Example \ref{aut}). However, $1=d(0,2)\neq d(f(0), f(2))=d(0, 6)=2$ (the path $6,8,0$ between $6$ and $0$ gives the minimum length equal to $2$). Notice also that $2\in S$ and $f(2)=6 \notin S$, hence there exists automorphisms $f$ and symmetric sets $S$ such that $f(S)\neq S$.    
\end{example}
\noindent Because left translations preserve the Cayley metric, using the last theorem we obtain the corollary below. This result is used in the next section to highlight a mathematical (algebraic and geometric) feature of counterpoint. 
\begin{corollary}\label{cor} Let $f\in \text{Aut}(G)$ as in Theorem \ref{iso} i), and $w\in G$ fixed. Then the affine transformation  $L(g)=wf(g)$, $\forall$ $g\in G$, is an isometry on the Cayley graph of $G$ with respect to $S$. 
\end{corollary}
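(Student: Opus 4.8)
The plan is to recognize $L$ as the composition of two maps each of which is already known to preserve the Cayley metric $d$, and then to invoke the elementary fact that a composition of isometries is again an isometry.

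First I would write $L = \lambda_w \circ f$, where $\lambda_w : G \to G$ is the left translation $\lambda_w(g) = wg$. By hypothesis $f$ is as in Theorem \ref{iso} i), that is, $f(S) = S$; hence the equivalence established in that theorem guarantees that $f$ is an isometry with respect to $S$, so $d(f(x), f(y)) = d(x,y)$ for all $x,y \in G$. Next I would invoke the left-translation invariance of $d$ recorded earlier (the identity $d(g,h) = d(\omega g, \omega h)$ for all $\omega \in G$, which follows from $g^{-1}h = (\omega g)^{-1}(\omega h)$); specializing $\omega = w$ shows that $\lambda_w$ is itself an isometry.

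Chaining these two facts then finishes the argument. For arbitrary $x,y \in G$,
$$d(L(x), L(y)) = d\bigl(w f(x), w f(y)\bigr) = d\bigl(f(x), f(y)\bigr) = d(x,y),$$
where the first equality is the definition of $L$, the second is left-invariance applied to the points $f(x)$ and $f(y)$, and the third is the isometry property of $f$ supplied by Theorem \ref{iso}. This is precisely identity (\ref{e1}) for the map $L$, so $L$ is an isometry on the Cayley graph of $G$ with respect to $S$.

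There is no genuine obstacle here: this corollary is the easy, constructive side of the isometry theory, in contrast to the rigidity direction discussed before the theorem. The only point requiring a little care is bookkeeping in the order of composition, namely that left-invariance must be applied to the already-transformed points $f(x)$ and $f(y)$ rather than to $x$ and $y$ directly; once the decomposition $L = \lambda_w \circ f$ is fixed, the conclusion is immediate.
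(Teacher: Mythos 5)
Your proof is correct and follows essentially the same route the paper intends: the corollary is presented as an immediate consequence of the left-translation invariance of $d$ combined with Theorem \ref{iso}, which is precisely your decomposition $L = \lambda_w \circ f$. The chained computation $d(L(x),L(y)) = d(wf(x),wf(y)) = d(f(x),f(y)) = d(x,y)$ is exactly the argument the paper leaves implicit.
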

\noindent Certain affine transformations will help explain counterpoint later on. To that end the theorem below characterizes affine reflections. We will need this result only in  the $\mathbb{Z}_n$ setting, though the proof is straightforward in general for finitely generated groups.
\begin{proposition}\label{ccpt}
  Let $G$ be a group, $\varphi\in\text{Aut }(G)$, and $w\in G$. If $T$ is either of the affine transformations $R(g)=\varphi(g)w$, $\forall$ $g\in G$, or $L(g)=w\varphi(g)$,  $\forall$ $g\in G$, then the following are equivalent:\\
    i) $T^2=Id$;\\
    ii) $\varphi^2=Id$ and $\varphi(w) w=e$.\\
   In particular, if $G=\mathbb{Z}_n$ and for some $h\in U(n)$, $\varphi(g)=h\odot g$ $\forall$ $g\in\mathbb{Z}_n$, then i) and ii) are equivalent to 
   iii) $h^2g \oplus h w\oplus w=g$ for all $g\in\mathbb{Z}_n$.   
\end{proposition}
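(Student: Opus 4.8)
The plan is to prove the general equivalence i)$\Leftrightarrow$ii) by directly computing $T^2$ and reading off the two conditions, and then obtain iii) in the $\mathbb{Z}_n$ case as a mere transcription of $T^2=\mathrm{Id}$ into additive notation. First I would treat the right-translation case $R(g)=\varphi(g)w$. Using that $\varphi$ is a morphism, I compute
\[
R^2(g)=R(\varphi(g)w)=\varphi(\varphi(g)w)\,w=\varphi^2(g)\,\varphi(w)\,w,
\]
where the last equality uses $\varphi(\varphi(g)w)=\varphi^2(g)\varphi(w)$. Thus $R^2=\mathrm{Id}$ amounts to $\varphi^2(g)\varphi(w)w=g$ for every $g\in G$.

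For i)$\Rightarrow$ii) I would evaluate this identity at $g=e$. Since every automorphism fixes the identity, $\varphi^2(e)=e$, so the identity collapses to $\varphi(w)w=e$, which is the second half of ii). Substituting $\varphi(w)w=e$ back into $\varphi^2(g)\varphi(w)w=g$ leaves $\varphi^2(g)=g$ for all $g$, i.e. $\varphi^2=\mathrm{Id}$. The converse ii)$\Rightarrow$i) is then immediate: under $\varphi^2=\mathrm{Id}$ and $\varphi(w)w=e$ the formula for $R^2(g)$ reduces to $g\cdot e=g$.

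The left-translation case $L(g)=w\varphi(g)$ runs in parallel, but I would flag the one point that needs care. Here $L^2(g)=w\,\varphi(w\varphi(g))=w\,\varphi(w)\,\varphi^2(g)$, so evaluating $L^2=\mathrm{Id}$ at $g=e$ produces $w\varphi(w)=e$ rather than $\varphi(w)w=e$. These are equivalent in any group, since each says $\varphi(w)=w^{-1}$; recording this equivalence is the only nonroutine step, as $G$ is not assumed abelian and the order of factors must be tracked throughout. With this observed, the same evaluate-at-$e$-then-substitute argument gives i)$\Leftrightarrow$ii) for $L$ as well.

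Finally, for the $\mathbb{Z}_n$ specialization with $\varphi(g)=h\odot g$, commutativity makes $R$ and $L$ coincide, so I may compute a single $T^2$. Writing the group law additively and expanding with the distributivity of $\odot$ over $\oplus$ recorded earlier,
\[
T^2(g)=h\odot\big((h\odot g)\oplus w\big)\oplus w=(h^2\odot g)\oplus(h\odot w)\oplus w.
\]
Hence $T^2=\mathrm{Id}$ is exactly the statement $h^2 g\oplus hw\oplus w=g$ for all $g$, which is iii); combined with the already-established i)$\Leftrightarrow$ii), this closes the chain. I expect no genuine obstacle here, since the work is a short morphism computation; the only things to stay vigilant about are the noncommutative bookkeeping in the general step and the reconciliation of $w\varphi(w)=e$ with $\varphi(w)w=e$.
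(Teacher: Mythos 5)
Your proposal is correct and follows essentially the same route as the paper: expand $T^2(g)$ using the morphism property to get $\varphi^2(g)\varphi(w)w=g$, evaluate at $g=e$ to extract $\varphi(w)w=e$, substitute back to obtain $\varphi^2=\mathrm{Id}$, and observe that $w\varphi(w)=e$ and $\varphi(w)w=e$ are equivalent in any group (the paper records this same equivalence up front before reducing to the case $T=R$). The $\mathbb{Z}_n$ transcription of iii) also matches the paper's treatment.
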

\begin{proof} In a group, the inverse exists and is unique, therefore $ab=e$ and $ba=e$ are equivalent for any $a,b$ in $G$. Thus $\varphi(w) w=e$ is equivalent to $w\varphi(w) =e$. It suffices then to prove i)$\iff$ii) e.g. for $T=R$ ( the argument is similar if $T=L$, to get condition $\varphi^2=Id$). We have $R(R(g))=g$, $\forall$ $g\in G$ $\iff$ $\varphi(\varphi(g)w)w=g $, $\forall$ $g\in G$ $\iff$ $\varphi(\varphi(g))\varphi(w)w=g $, $\forall$ $g\in G$. The last condition clearly holds if ii) is satisfied, hence we get i). If i) is satisfied then the last condition implies, on one hand  $\varphi(w)w=e$ (plugging in it $g=e$), and on another hand $\varphi(\varphi(g))=g$, $\forall$ $g\in G$ (by updating it). Hence $\varphi^2=Id$ and we get ii). Hence i) $\iff$ ii). Condition iii) is an update of ii) when $G=\mathbb{Z}_n$ and $\varphi\in \text{ Aut }(\mathbb{Z}_n)$.
 \end{proof}
\begin{example}\label{refl}
 There always exists $\varphi\in \text{ Aut }(G)$ such that $\varphi^2=Id$ and $\varphi$ is an isometry with respect to any generating, symmetric set $S\subset G$. The map $\varphi(g)=g^{-1}$, $\forall$ $g\in G$ easily satisfies $\varphi^2(g)=g$, $\forall$ $g$. Because $S=S^{-1}$, using Theorem \ref{iso}, $\varphi$ is an isometry on the unoriented Cayley graph.   
\end{example}

\section{Understanding Western Music with $\mathbb{Z}_{12}$}\label{westmusic}
\subsection{Chords} In $\mathbb{Z}_{12}$ terms, the $C$-major triad $C-E-G$ is encoded as the sequence  $0-4-7$. The $C$ minor chord is encoded as $0-3-7$. One can observe the following pattern: naming $0$ the root of the chord, add in succession $+4$, $+3$ (major). For the minor chord the order is reversed. We can infer therefore, the following definition where the paths are read in $\mathbb{Z}_{12}$'s graph, see Figure 2.
\begin{definition}\label{cho}
Let $x\in\mathbb{Z}_{12}$. An $x-$major triad chord is the path $x\to x+4\to x+4+3$. The $x-$minor triad chord is the path $x\to x+3\to x+3+4$.  
More generally, an $x-$major (minor) chord is a path $x_1\to x_2\to\cdots \to x_k$ in the oriented Cayley graph such that $x=x_1$ and:  \\
i) $x_{2}=x_{1}+4$ ( $x_{2}=x_{1}+3$ for minor )\\
ii) $x_{3}=x_{2}+3$ ( $x_{3}=x_{2}+4$ for minor )\\
iii) $x_{i+1}\ominus x_i\in\{3,4\}$, $\forall$ $i=1,...,k$ \\
iv) the path is non-self intersecting unless $x_k=x_1$.
\end{definition}
\noindent The first two conditions spell out the patterns $+4+3$ for the start of a major chord and $+3+4$ for the start of a minor one. The third condition allows the option of oscillation or repetition of generators $3$ and $4$ as counting distances between notes. In this paper, when we need a major (minor) chord then iii) is taken with alternating order $+4,+3$ ( $+3,+4$ for minor) all the way. The last condition restricts a chord from wandering on the graph.   

\begin{example} 
We list below a set of major and minor chords together with their classical music names. The patterns can be read on Figure 2. \\
\\
 \textbf{Triads} \\
 Major Triad: $\textcolor{blue}{+4},\textcolor{red}{+3}$
 \\ Minor Triad: $\textcolor{red}{+3},\textcolor{blue}{+4}$
 \\ Diminished Triad: $\textcolor{red}{+3},\textcolor{red}{+3}$
 \\ Augmented Triad: $\textcolor{blue}{+4},\textcolor{blue}{+4}$
 \\
 \\
 \textbf{ $7^\text{th}$ Chords}
 \\Major $7^\text{th}$ Chord: $\textcolor{blue}{+4},\textcolor{red}{+3},\textcolor{blue}{+4}$
 \\
 Dominant $7^\text{th}$ Chord: $\textcolor{blue}{+4},\textcolor{red}{+3},\textcolor{red}{+3}$
 \\Minor $7^\text{th}$ Chord: $\textcolor{red}{+3},\textcolor{blue}{+4},\textcolor{red}{+3}$
  \\Fully Diminished $7^\text{th}$ Chord: $\textcolor{red}{+3},\textcolor{red}{+3},\textcolor{red}{+3}$
 \\Half Diminished $7^\text{th}$ Chord: $\textcolor{red}{+3},\textcolor{red}{+3},\textcolor{blue}{+4}$
 \\Augmented Major $7^\text{th}$ Chord: $\textcolor{blue}{+4},\textcolor{blue}{+4},\textcolor{red}{+3}$
\\
 \\
 \textbf{ $9^\text{th}$ Chords}\\
Major: $\textcolor{blue}{+4},\textcolor{red}{+3},\textcolor{blue}{+4},\textcolor{red}{+3} $  \\
Minor: $\textcolor{red}{+3},\textcolor{blue}{+4},\textcolor{red}{+3},\textcolor{blue}{+4} $\\
Dominant 9: $\textcolor{blue}{+4},\textcolor{red}{+3},\textcolor{red}{+3},\textcolor{blue}{+4} $\\
Dominant Flat 9:  $\textcolor{blue}{+4},\textcolor{red}{+3},\textcolor{red}{+3}, \textcolor{red}{+3}$
\\
Half Diminished Flat 9: $\textcolor{red}{+3},\textcolor{red}{+3},\textcolor{blue}{+4},\textcolor{red}{+3}$
 
\end{example}

\noindent From a pure mathematical perspective, one could propose chords associated to any set of generators and set the major/minor names with respect to the generators considered. However, we should follow some rules to eliminate what can be deemed as trivial or redundant. For example $\mathbb{Z}_{12}=\langle 1 \rangle$ or $\mathbb{Z}_{12}=\langle 9,4 \rangle$ or $\mathbb{Z}_{12}=\langle 3,8 \rangle$. 
\begin{remark} 
In Western classical music, certain permutations of the notes within a major/minor chord give rise to more chords. With more rigour, we might have called the chords in Definition \ref{cho} basic chords. For example, the first inversion triad is obtained by applying the cycle permutation $(123)$ to a root position triad. The second inversion triad obtained by applying $(132)$ to the root position triad. Another interesting example is the so-called $C$ heavenly chord. Starting with the $C$ major $9^{\text{th}}$ chord $C-E-G-B-D$, one applies $(12)$ cycle permutation on the first two notes, and the $(132)$ cycle on the last three to obtain $E-C-D-G-B$. We leave the subject of chords obtained by such transformations out, though it provides more clues of the strong connection between group theory and musical expression.  
\end{remark}
\noindent The sound component so far seems removed from the group theory behind chord construction. We will connect with it in section \ref{sound}. Before then, we will explain algebraically the concept ``circle of fifths", display the patterns by which major and minor scales are built, and focus on symmetry properties of the counterpoint. These constructions represent guiding  principles in music composition.

\subsection{The Circle of Fifths}
\begin{definition}
The following ordered sequence $\mathcal{C}:=[0,7,2,9,4,11,6,1,8,3,10,5]$ is called circle of fifths with respect to the chromatic scale $\mathbb{Z}_{12}$.    
\end{definition}
\begin{remark}
It is straightforward to observe the pattern in building the circle above: starting at $C=0$ add $7=4+3$, the sum of the generators $\{3,4\}$ of $\mathbb{Z}_{12}$, successively. Note that as a set, $\mathcal{C}$ spans the group $\mathbb{Z}_{12}$. The reason is that gcd$\{7,12\}=1$ hence $\langle 7 \rangle =\mathbb{Z}_{12}$. In Western classical music, one draws a circle on which the elements of $\mathcal{C}$ are placed as vertices which we call ``keys". 
In Figure 3, the circle of fifths is drawn. From the previous section we easily infer that the name ``fifths" is due to counting on the $C$ major scale. The circle if fifths is connected to musical scales (more below). If we regard the consecutive vertices $F-C-G$ on the circle $\mathcal{C}$ as chords in the key of $C$, then the key is supplanted with more chords, according to the $C$ scale, and with a minor/diminished flavor.  

\end{remark}

\tikzset{
    to/.style={
        ->,
        thick,
        shorten <= 1pt,
        shorten >= 1pt,},
    from/.style={
        <-,
        thick,
        shorten <= 1pt,
        shorten >= 1pt,}
}

 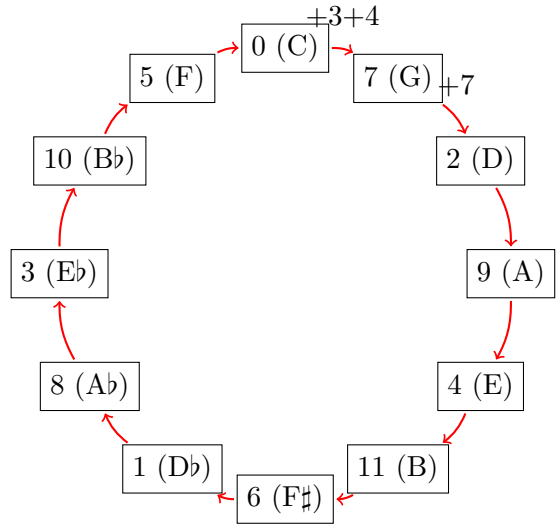
\begin{figure}
   \begin{center}
        \begin{tikzpicture}[scale=3]
           
            \node[shape=rectangle, draw=black] (0) at (0,1) {$0$ (C)};
            \node[shape=rectangle, draw=black] (7) at (1/2,.866) {$7$ (G)};
            \node[shape=rectangle, draw=black] (2) at (0.866,1/2) {$2$ (D)};
            \node[shape=rectangle, draw=black] (9) at (1,0) {$9$ (A)};
            \node[shape=rectangle, draw=black] (4) at (0.866,-1/2) {$4$ (E)};
            \node[shape=rectangle, draw=black] (11) at (1/2,-.866) {$11$ (B)}; 
            \node[shape=rectangle, draw=black] (6) at (0,-1) {$6$ (F$\sharp$)}; 
            \node[shape=rectangle, draw=black] (1) at (-1/2,-0.866) {$1$ (D$\flat$)};
            \node[shape=rectangle, draw=black] (8) at (-0.866,-1/2) {$8$ (A$\flat$)}; 
            \node[shape=rectangle, draw=black] (3) at (-1,0) {$3$ (E$\flat$)};
            \node[shape=rectangle, draw=black] (10) at (-0.866,1/2) {$10$ (B$\flat$)}; 
            \node[shape=rectangle, draw=black] (5) at (-1/2,0.866) {$5$ (F)}; 

            \path (0) edge[to, bend left=15,draw=red] node[label=above:{+3+4}] {} (7);
            \path (7) edge[to, bend left=15,draw=red] node[label=above:{+7}] {} (2);
            \path (2) edge[to, bend left=15,draw=red] node {} (9);
            \path (9) edge[to, bend left=15,draw=red] node {} (4);
            \path (4) edge[to, bend left=15,draw=red] node {} (11);
            \path (11) edge[to, bend left=15,draw=red] node {} (6);
            \path (6) edge[to, bend left=15,draw=red] node {} (1);
            \path (1) edge[to, bend left=15,draw=red] node {} (8);
            \path (8) edge[to, bend left=15,draw=red] node {} (3);
            \path (3) edge[to, bend left=15,draw=red] node {} (10);
            \path (10) edge[to, bend left=15,draw=red] node {} (5);
            \path (5) edge[to, bend left=15,draw=red] node {} (0);                                           
        \end{tikzpicture}
        \caption{The Circle of Fifths in Western Classical Music}
     \end{center}
\end{figure}

\begin{figure}
   \begin{center}
        \begin{tikzpicture}[scale=3]
           
            \node[shape=rectangle, draw=black] (0) at (0,1) {$0$};
            \node[shape=rectangle, draw=black] (5) at (0.866,1/2) {$5$};
            \node[shape=rectangle, draw=black] (4) at (0.866,-1/2) {$4$};
            \node[shape=rectangle, draw=black] (3) at (0,-1) {$3$}; 
            \node[shape=rectangle, draw=black] (2) at (-0.866,-1/2) {$2$}; 
            \node[shape=rectangle, draw=black] (1) at (-0.866,1/2) {$1$}; 

            \path (0) edge[to, bend left=25,draw=red] node[label=above:{+2+3}] {} (5);
            \path (5) edge[to, bend left=25,draw=red] node[label=right:{+5}] {} (4);
            \path (4) edge[to, bend left=25,draw=red] node {} (3);
            \path (3) edge[to, bend left=25,draw=red] node {} (2);
            \path (2) edge[to, bend left=25,draw=red] node {} (1);
            \path (1) edge[to, bend left=25,draw=red] node {} (0);                                   
        \end{tikzpicture}
        \caption{The Circle of Fifths for $\mathbb{Z}_{6}=\langle 2,3\rangle$}
   \end{center}
\end{figure}

\begin{figure} 
    \begin{center}  
        \begin{tikzpicture}[scale=3]
           
            \node[shape=rectangle, draw=black] (0) at (0,1) {$0$};
            \node[shape=rectangle, draw=black] (7) at (0.6,0.8) {$7$};
            \node[shape=rectangle, draw=black] (4) at (0.95,.3) {$4$};
            \node[shape=rectangle, draw=black] (1) at (0.95,-.3) {$1$}; 
            \node[shape=rectangle, draw=black] (8) at (0.6,-.8) {$8$}; 
            \node[shape=rectangle, draw=black] (5) at (0,-1) {$5$}; 
            \node[shape=rectangle, draw=black] (2) at (-0.6,-0.8) {$2$}; 
            \node[shape=rectangle, draw=black] (9) at (-0.95,-.3) {$9$}; 
            \node[shape=rectangle, draw=black] (6) at (-0.95,0.3) {$6$}; 
            \node[shape=rectangle, draw=black] (3) at (-0.6,0.8) {$3$}; 

            \path (0) edge[to, bend left=20,draw=red] node[label=above:{+2+5}] {} (7);
            \path (7) edge[to, bend left=20,draw=red] node[label=right:{+7}] {} (4);
            \path (4) edge[to, bend left=20,draw=red] node {} (1);
            \path (1) edge[to, bend left=20,draw=red] node {} (8);
            \path (8) edge[to, bend left=20,draw=red] node {} (5);
            \path (5) edge[to, bend left=20,draw=red] node {} (2);
            \path (2) edge[to, bend left=20,draw=red] node {} (9);
            \path (9) edge[to, bend left=20,draw=red] node {} (6);
            \path (6) edge[to, bend left=20,draw=red] node {} (3);
            \path (3) edge[to, bend left=20,draw=red] node {} (0);

        \end{tikzpicture}
        \caption{The Circle of Fifths for $\mathbb{Z}_{10}=\langle 2,5\rangle$}
   \end{center}
\end{figure}

\begin{remark} We will generalize the concept ``circle of fifths" later on, and prove that it makes sense for certain $\mathbb{Z}_n$ and generators. The concept is equivalent to the one found in \cite{CM}, although we arrive at it via group generators. For example, in Figure 4 and 5 we draw the analogous concept for the groups $\mathbb{Z}_6=\langle 2,3\rangle$, $\mathbb{Z}_{10}=\langle 2,5\rangle$, and the corresponding summation of their generators.  The circle can be ``trivial", by which we mean musically uninteresting: for example the generator $1$ (there is nothing to add but $+1$ ) gives rise to a circle of consecutive notes, similar to the unoriented Cayley graph corresponding to $\langle 1\rangle $. Playing consecutive notes or chords may sound predictable. Another example is $\mathbb{Z}_{6}=\langle 3,4 \rangle$, because $(3+4 ) \text{mod } 6= 1$, so a circle of fifths based on these generators is trivial. It is also possible that other pairs  of generators define non-trivial circles; e.g. both cases $\mathbb{Z}_{10}=\langle 2, 5\rangle$ and $\mathbb{Z}_{10}=\langle 8, 5\rangle$. 
We caution the reader that the circles may not be comparable as sets for different groups $Z_n$; their construction depends on the generator set and the mod-$n$ binary operation. The group structure may not be compatible, e.g. $\mathbb{Z}_{10}$ is not a subgroup of $\mathbb{Z}_{12}$. 
\end{remark}

\subsection{Major and Minor Scales}
The development, evolution and explanation of musical scales is complicated. See for example \cite{Gan} for more insights with respect to various factors (historical, cultural) that influenced and helped refine this subject. Our view aims at unifying it with chords, circle of fifths, and later counterpoint, under the same roof represented by the algebraic and geometric features of the group $\mathbb{Z}_{12}$. The patterns we identify help define scales for other groups $\mathbb{Z}_n$ in section \ref{systems}. These are based again on the generators of the group and major/minor scales. We mention that in \cite{CM} generalized scales are constructed with respect to each pitch class $k$ relatively prime to $n$. This construction, however, does not display major/minor flavors. 
\begin{definition}
 Let $x\in\mathbb{Z}_{12}$. \\
 i) The $x$ major scale is the sequence
 $x, x\oplus 2,  x\oplus 4,  x\oplus 5,  x\oplus 7,  x\oplus 9,  x\oplus 11, x\oplus 12=x$. \\
 ii) The $x$ minor scale is the sequence
 $x, x\oplus 2,  x\oplus 3,  x\oplus 5,  x\oplus 7,  x\oplus 8,  x\oplus 10, x\oplus 12=x$.
\end{definition}
\noindent We defined the major/minor scale as a loop at note $x$. However, when playing the scale on an instrument, the ending is placed an octave higher.  
 \begin{example}
 The $C$ major scale is the familiar sequence of notes $C, D, E, F, G, A, B, C$. The $C$ minor scale is the sequence $C, D, E\fl, F, G, A\fl, B\fl, C$.   
 \end{example}

\begin{remark}\label{scale}
The definition might seem peculiar and needs some justification. Given $x$ we will apply the following steps to ``populate" an $x$ major and minor scales sequence. In these steps we simply observe that with $k=2$, $4=2\cdot k$ is even (and say it displays the $'22'$ pattern), and with $l=1$, $3=2\cdot l+1=1+2\cdot l$ is odd (and say it displays the $'21'$/ $'12'$ patterns). These simple ideas will enable us to generalize scales  construction later in section \ref{systems}. \\
$\bullet$ The sequence must contain the $x$ major (minor) seventh chord. Hence the starting subsequence is $x$, $x\oplus 4$, $x\oplus 7$, $x\oplus 11$  ($x$, $x\oplus 3$, $x\oplus 7$, $x\oplus 10$ for minor). \\
$\bullet$ If the counting distance between two subsequence elements is $\leq 2$ then there is nothing to add in between. Also, close the loop with $x\oplus 12$ in both major/minor subsequences. All other counting distances (not including the endpoint) between two consecutive elements in the subsequence is either $3$ or $4$. \\
$\bullet$ 
 For the major scale: to the subsequence thus far, $x$, $x\oplus 4$, $x\oplus 7$, $x\oplus 11$, $x\oplus 12$, add elements in such a way that the following patterns emerge between consecutive, updated elements: $'22'$ if the counting distance between two points is $4$, and $'12'$ in case it is $3$. Hence, for the leg $x, x\oplus 4$ only $x\oplus 2$ is needed because $x, x\oplus 2, x\oplus 4$ shows the $'22'$ distance pattern. Between  $x\oplus 4$, $x\oplus 7$, aiming at the $'12'$ pattern we add $x\oplus (4+1)$. Between $x\oplus 7$ and  $x\oplus 11$ we need add $x\oplus 9$ to obtain the $'22'$ pattern. The counting distance pattern is the familiar $'22\text{ }12\text{ }22 \text{ }1'$.  \\
$\bullet$ For the minor scale: add elements to the subsequence thus far according to the patterns $'22'$ in case the counting distance is $4$, and an alternate $'21'/'12'$ patterns in case it is $3$. Similar analysis reveals the $x$ minor scale in the definition above. Notice however the somehow whimsical alternation of the $'21'$ pattern ( for the leg $x, x\oplus 2, x\oplus 3$) and the $'12'$ pattern (for the leg  $x\oplus 7, x\oplus 8, x\oplus 10$). The minor scale counting distance pattern is the familiar $'21\text{ }22\text{ }12 \text{ }2'$.
 \end{remark}

\subsection{Counterpoint}
Counterpoint represents a sum of composing techniques which combine two or more voices. These techniques have arisen and evolved within the Western classical music body since the $9^{\text{th}}$ century. A first systematic compilation is presented in \cite{Fux}, where  the rules of composing with counterpoint are spelled out. For a self contained treatment we refer for  example to \cite{Rus}. We will focus on the first species of counterpoint, and explain in detail its connection with the concepts and results presented in the last part of Section \ref{groups}. 
Our group theory point of view is inspired by \cite{Mco}, where the group $\mathbb{Z}_3\times \mathbb{Z}_4$, which is isomorphic to $\mathbb{Z}_{12}$, is used instead. Its unoriented Cayley graph, obtained with generators $(1,0)$ and $(0,1)$, is called a discrete torus. It is the same graph as the one in Figure 2, without arrows (unoriented Cayley graph). The three dimensional ``torus"  can be obtained by ``pulling out" some of the planar edges so that the (imaginary) edge crossing effect disappears. 
The fact that the the group of affine transformations  $\text{Aff}(\mathbb{Z}_{12})$  acts isometrically on the torus of thirds was already noted in \cite{Mco} and proved rigorously in Agustín-Aquino’s master thesis.

\begin{definition}\label{cptd} The elements of the set $K\subset \mathbb{Z}_{12}$, $K:=\{0,3,4,7,8,9\}$, are called consonants. We say that its complement in $\mathbb{Z}_{12}$, $\{1,2,5,6, 10,11 \}$, consists of dissonant elements and denote this set by $D$. The partition $(K,D)$ is called Fux dichotomy.
\end{definition}
\noindent The main features of the first species of counterpoint are described below: \\
$\bullet$ If voice A plays $ x, y, z...$ then voice  
B plays$: x\oplus k_1, y\oplus k_2, z\oplus k_3... $ with $k_i\in K$, $i=1,2,3...$\\
$\bullet$ There are restrictions: ``parallel" fifths are forbidden,  i.e. consecutive distances $k_i, k_{i+1}$ in the sequence above cannot be both equal to $7$. For example, the distances $3,4$ (minor/major third) and $8,9$ (minor/major sixth) ``are fine but no more than three in a row", see e.g. \cite{Rus}.\\
Let us mention that the treatment below and its generalization in Section \ref{systems} covers only the consonant/dissonant paradigm, and not the exceptions present in the various species of counterpoint. \\
\\
There has been for a long time, a discussion among music theorists
about considering the perfect fourth as consonant, i.e. add $5$ to $K$. In \cite{Mco} the choice of the partition $(K, D)$ is explained through actions of symmetries $T$ on the unoriented Cayley graph. More precisely, the three properties  
\begin{equation}\label{cpt}
T^2=Id,\quad T(K)=D,\quad T\text{ is an isometry}
\end{equation}
 of the affine transformation $T(x)=5x\oplus 2$ are interpreted as a clue that $5$, the fourth, should stay dissonant. Using the tools from Section \ref{groups}, we are in position to prove the theorem below, which is the result mentioned in \cite{Mco}. See also \cite{AJM} for an in depth analysis and examples of multiple counterpoint partitions that are possible due to the action of the linear group of affine transformations on $\text{Aff}(\mathbb{Z}_{2n})$.  
 \begin{theorem}\label{cpt12} There exists exactly one affine transformation $T$ on the unoriented Cayley graph of $\mathbb{Z}_{12}$ generated by $S=\{3,4,8,9\}$, satisfying the conditions  (\ref{cpt}). 
More precisely, this transformation is given by $T(x)=5x\oplus 2$ for $x\in\mathbb{Z}_{12}$.
 \end{theorem}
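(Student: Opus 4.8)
The plan is to parametrize all candidates and then peel off the three conditions in (\ref{cpt}) one at a time. Since $\mathbb{Z}_{12}$ is abelian, Definition \ref{aff} makes every affine transformation a translate of an automorphism, and by Example \ref{aut} the automorphisms are exactly $\varphi_h(x)=h\odot x$ with $h\in U(12)=\{1,5,7,11\}$. Thus any candidate has the form $T(x)=h\odot x\oplus w$ with $h\in\{1,5,7,11\}$ and $w\in\mathbb{Z}_{12}$, leaving finitely many transformations to sift. First I would dispose of the isometry condition: a translation is always an isometry, so $T$ is an isometry exactly when $\varphi_h$ is, and by Theorem \ref{iso} this means $\varphi_h(S)=S$. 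Example \ref{isom} already records that all four automorphisms fix $S=\{3,4,8,9\}$ setwise, so the isometry requirement imposes no restriction on $h$ whatsoever. This is worth stating explicitly, as it is precisely the point where the group-only viewpoint parts ways with the ring-based count in \cite{AJM}: here it is the remaining two conditions that must do all the selecting.

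Next I would invoke Proposition \ref{ccpt} for the condition $T^2=Id$. Every element of $U(12)$ squares to $1$ modulo $12$, so $\varphi_h^2=Id$ holds automatically, and condition (iii) of that proposition collapses to $h\odot w\oplus w=0$, i.e. $(h\oplus 1)\odot w=0$ in $\mathbb{Z}_{12}$. This confines the admissible $w$ to a coset depending on $h$: one finds $w\in\{0,6\}$ for $h=1$, $w$ even for $h=5$, $w\in\{0,3,6,9\}$ for $h=7$, and $w$ arbitrary for $h=11$. Note also that $T^2=Id$ together with $T(K)=D$ forces $T(D)=K$, so $T$ must swap $K$ and $D$ and in particular be fixed-point free; I may use this as a sanity check rather than as a formal step.

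The decisive step, and the one I expect to be the real work, is the dichotomy condition $T(K)=D$. For this I would compute the four sets $h\odot K$ and ask which, after translation by an admissible $w$, lands on $D$. The useful observation is that a translation preserves the cyclic sequence of gaps between consecutive elements of a six-element subset of $\mathbb{Z}_{12}$. Now $K$ has gap multiset $\{1,1,1,3,3,3\}$ while $D$ has $\{1,1,1,2,3,4\}$; since $1\odot K=K$ and $11\odot K$ (the reflection $x\mapsto-x$) share $K$'s multiset, the cases $h=1$ and $h=11$ are eliminated outright. The genuinely delicate case is $h=7$: here $7\odot K=\{0,1,3,4,8,9\}$ \emph{does} share the gap multiset $\{1,1,1,2,3,4\}$ with $D$, yet its cyclic gap sequence turns out to be only the \emph{reversal} of $D$'s, not a rotation of it, so no translate can meet $D$ (a reflection would be required). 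This is the subtle point I would flag: the easy counting invariant survives, and one must look at the ordered gap sequence to rule it out. Finally, for $h=5$ one has $5\odot K=\{0,3,4,8,9,11\}$, whose cyclic gap sequence is a genuine rotation of that of $D$; because the gap value $4$ occurs exactly once, the aligning translate is unique, and a direct check yields $w=2$ with $5\odot K\oplus 2=D$. As $w=2$ is even it also meets the involution constraint, so $T(x)=5x\oplus 2$ is the one and only transformation satisfying all three conditions of (\ref{cpt}), which is what the theorem claims.
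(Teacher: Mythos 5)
Your proposal is correct, and it reaches the conclusion by a genuinely different route at the decisive step. The parametrization $T(x)=h\odot x\oplus w$ with $h\in U(12)$, the observation that the isometry condition is vacuous (via Theorem \ref{iso} and Example \ref{isom}), and the reduction of $T^2=Id$ to $(h\oplus 1)\odot w=0$ all match the paper's setup; your coset computations for the admissible $w$ in each of the four cases are the same as the paper's. Where you diverge is in handling $T(K)=D$: the paper disposes of each surviving candidate by exhibiting an explicit element whose image lands outside $D$ (e.g.\ ``if $w=1$ then $T(4)=9\notin D$''), a finite but opaque verification. You instead use the cyclic gap sequence of a six-element subset of $\mathbb{Z}_{12}$ as a translation invariant: $K$ and $11\odot K$ have gap multiset $\{1,1,1,3,3,3\}$ versus $D$'s $\{1,1,1,2,3,4\}$, killing $h=1,11$ at once; $7\odot K=\{0,1,3,4,8,9\}$ passes the multiset test but its gap sequence $(1,2,1,4,1,3)$ is only a reversal, not a rotation, of $D$'s $(1,3,1,4,1,2)$, so no translate works; and $5\odot K=\{0,3,4,8,9,11\}$ has gap sequence $(3,1,4,1,2,1)$, a rotation of $D$'s, with the unique gap $4$ pinning down the unique aligning translate $w=2$. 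I checked these computations and they are all correct. What your approach buys is an explanation of \emph{why} uniqueness holds (the single occurrence of the gap $4$) and a warning that the naive multiset invariant is insufficient in the $h=7$ case; what the paper's approach buys is that it requires no auxiliary lemma about translates and gap sequences and is immediately verifiable line by line. Either argument is complete; if you write yours up, you should state explicitly the (easy) fact that $A\oplus w=B$ for some $w$ if and only if the cyclic gap sequences of $A$ and $B$ are rotations of one another, since that is the lemma carrying the load.
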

 \begin{proof} It can be checked directly that $T(x)=5x\oplus 2$ does satisfy the required properties, however, we need to prove that this is the only transformation satisfying (\ref{cpt}). Let $T:\mathbb{Z}_{12}\to \mathbb{Z}_{12}$, $T=f\oplus w$ be an affine transformation, hence $f\in\text{Aut}(\mathbb{Z}_{12})$ and $w\in \mathbb{Z}_{12}$ (because $\mathbb{Z}_{12}$ is abelian, right and left translations coincide). From Example \ref{aut} we see that $f$ is necessarily one of the automorphisms $\{f_1, f_2, f_3, f_4\}$. 
  We will treat each case separately, but we notice first that by Corollary \ref{cor} and Example \ref{isom}, $T$ is an isometry in each of the four cases. It remains to select the ones that satisfy $T^2=Id$ and $T(D)=K$.
 \\
 {\bf{Case 1}}. $f(x)=x$ $\quad\forall$ $x\in \mathbb{Z}_{12}$. Then $T(x)=x\oplus w$. Because $T^2=Id$, in particular  $T^2(0)=0=w\oplus w$. Hence $w\in \{0,6\}$. However, none of the transformations $T(x)=x$ and $T(x)=x\oplus 6$ satisfies $T(K)=D$. We discard this case. 
 \\
 {\bf{Case 2}}. $f(x)=5\odot x$ $\quad\forall$ $x\in \mathbb{Z}_{12}$. Then $T(x)=5x\oplus w$. Because $T^2(0)=0$ we must have   $6w\text{ mod } 12=0$. Hence $w\in \{0,2,4,6,8,10\}$. The requirement  $T(K)=D$ rules out all values except $w=2$. Also, by a direct check (or the comment in Example \ref{aut}, or by Proposition \ref{ccpt} ) we have that with $w=2$, $T^2=Id$. Hence we found an affine transformation $T$  that satisfies all three requirements in (\ref{cpt}). 
 \\
 {\bf{Case 3}}. $f(x)=7\odot x$ $\quad\forall$ $x\in \mathbb{Z}_{12}$.
 Then $T(x)=7x\oplus w$. Because $T^2(0)=0$, we must have   $8w\text{ mod } 12=0$. Hence $w\in \{0,3,6,9\}$. None of these values $w$ corresponds to $T(K)=D$. E.g. when $w=3$, $T(0)=3\notin D$; when $w=6$, $T(4)=7\notin D$; when $w=6$, $T(3)=3\notin D$; when $w=9$, $T(0)=9\notin D$. Hence, we discard this case.\\ 
{\bf{Case 4}}. $f(x)=11\odot x$ $\quad\forall$ $x\in \mathbb{Z}_{12}$.  Then $T(x)=11x\oplus w$. Notice in this case $T^2(x)=121x\oplus 12w=121x\text{ mod }12=x$, hence any $w$ in $\mathbb{Z}_{12}$ could do. Because $T(0)=w$, the values $w\in\{0,3,4,7,8,9\}$ are ruled out. The remaining ones $w\in\{1,2,5,6,10,11\}$ are ruled out as follows: if $w=1$ then $T(4)=9\notin D$; if $w=2$ then $T(7)=7\notin D$; if $w=5$ then $T(8)=9\notin D$; if $w=6$ then $T(3)=3\notin D$; if $w=10$ then $T(3)=7\notin D$; if $w=11$ then $T(3)=8\notin D$. \\
From the four cases above we conclude only $T(x)=5\odot x\oplus 2$ satisfies (\ref{cpt}). 
\end{proof}

\begin{remark} The theorem above does not ``create" the Fux partition, though it asserts an important symmetry property. One would like to have a rigorous principle by which such partition arises, especially if one wants to extend counterpoint to other $\mathbb{Z}_n$. In \cite{AJM}, by analysing what affine transformations correspond to suitable partitions such that (\ref{cpt}) holds, a plethora of counterpoint partitions (called  strong dichotomies) is found. From a mathematical perspective such abundance may not point out the Fux partition as special. We could filter out a few if we impose restrictions. The reader has surely noticed that the consonant set $K$ contains the generators $3,4$ and their inverses $9$ and $8$ in $\mathbb{Z}_{12}$. These elements represent paths of length $1$ in the unoriented Cayley graph. Thus, the set  $\{0,3,4,8,9\}$ of consonants is made of minimum distances between two voices, on the unoriented Cayley graph. One may wish to formulate counterpoint as a simple minimization principle, however because $7\in K$, representing paths of length $2$ ($7=3+4$), one would have to either add all such paths (e.g. $5=(9+8)\text{ mod }12$, $6=3+3$ encode paths of length $2$) or remove $7$ from $K$. Either way, we would be led to drop (\ref{cpt}), because in particular, $T$ being bijective, we get $|D|=|T(K)|=|K|$, i.e. $D$ and $K$ must have the same number of elements. 
\end{remark}
\noindent We want to arrive at the important symmetry conditions in (\ref{cpt}) while keeping the min distance elements (i.e. the generating symmetric set $S=\{3,4,8,9\}$) in the consonant set. In this way, we will ``create" all possible partitions $(K,D)$ of $\mathbb{Z}_{12}$ that satisfy (\ref{cpt}) for some affine $T$. Let $K':=S\cup\{0\}=\{0,3,4,8,9\}$. We want $K'\subset K$. Then $T(K')\subset T(K)=D$ and $K'\cap D=\emptyset$ because $K\cap D=\emptyset$; hence $T(K')\cap K'=\emptyset$.  
To find $K$ (and by default $D=\mathbb{Z}_{12}\setminus K$) we need decide only one more element besides those in $K'$, i.e. $K=K'\cup\{z\}$ where $z\in\{1,2,5,6,7,10,11\}$ is to be found. This will be done by asking what affine $T$, if any, satisfies:
\begin{equation}\label{wcpt}
T^2=Id,\quad T(K')\cap K'=\emptyset,\quad T\text{ is an isometry on the unoriented Cayley graph}
\end{equation}
Using the first and third conditions in (\ref{wcpt}) and the ideas in the proof of Theorem \ref{cpt} we have to have $T=f\oplus w$ for some $f\in \text{Aut }(\mathbb{Z}_{12})$ and  $w\in\mathbb{Z}_{12}$. Because $T(0)=w$ it follows from (\ref{wcpt}) that $w\notin K'$, hence $w\in\{1,2,5,6,7,10,11 \}$.   
At the same time, $f$ must be one of the automorphisms from Example \ref{aut}. We discuss the same four cases from the proof of Theorem \ref{cpt12}: If $f(x)=x$ then $w\in\{0,6\}$. From above restrictions on $w$ we are left with $w=6$. This value, however, violates $T(K')\cap K'=\emptyset$ because we would get $T(3)=9\in T(K')\cap K'$.  
If $f(x)=5\odot x $ then $w\in\{2, 6, 10\}$. We rule out $w=6$ because we would get $T(3)=15\oplus 6=9\in T(K')\cap K'$.   
The isometry $T(x)= 5x\oplus 2$ yields the partition sets   
$K=\{0,3,4,8,9,z\}$, $D=T(K)=\{ 2,5,10,6,11, T(z) \}$ where $z, T(z)\in\{1,7\}$. There are two choices for $z\neq T(z)$, hence we obtain two partitions that satisfy  (\ref{cpt}), for the uniquely found $T$:
\begin{align}
 K_1 &=\{0,3,4,8,9,7\}, \quad  D_1=\{ 2,5,10,6,11, 1 \}\label{p1} \\
 K_2 &=\{0,3,4,8,9,1\}, \quad D_2=\{ 2,5,10,6,11, 7\}\label{p2}
\end{align}
Similarly, the isometry $T(x)= 5x\oplus 10$ brings about the following partitions 
\begin{align}
K_3 &=\{0,3,4,8,9,5\}, \quad D_3=\{10,1,6,2,7,11 \}\label{p3} \\
K_4 &=\{0,3,4,8,9, 11\}, \quad D_4=\{10,1,6,2,7,5 \}\label{p4}
\end{align}
In case 3, $f(x)=7\odot x$ but now $w$ is restricted to $w=6$. Then $T(x)=7x\oplus 6$ has fixed point $x=3$ hence $T(K')\cap K'\neq\emptyset$, and no new partition arises. In case 4, $f(x)=11\odot x$. A thorough check against all values $w\in \{1,2,5,6,7,10,11\}$ shows no new partition $(K,D)$ is produced because either $T(K')\cap K'\neq\emptyset$ or $T$ has a fixed point. E.g., for the value $w=10$, the affine map $T(x)=11x\oplus 10$ does satisfy (\ref{wcpt}), but it has two fixed points $T(5)=5$ and $T(11)=11$. Hence we can't find $z$ such that $K=K'\cup\{z\}$ and $K\cap T(K)=\emptyset$. We summarize this discussion in the form of the following 
\begin{theorem}\label{4p} Let $\mathbb{Z}_{12}$ be generated by the symmetric set $S=\{3,4,8,9\}$, and let $K':=\{0\}\cup S$. There exist exactly four partitions $(K_i, D_i)_{i=1}^4$ of $\mathbb{Z}_{12}$ such that $K'\subset K_i$ and $\exists!$ $T_i:\mathbb{Z}_{12}\to \mathbb{Z}_{12}$  affine transformation satisfying (\ref{cpt}) with respect to the partition $(K_i, D_i)$,  $\forall$ $i\in\{1,2,3,4\}$. The partitions, given in (\ref{p1}), (\ref{p2}) correspond to the unique affine map $T(x)=5x \oplus 2$ , and in (\ref{p3}), (\ref{p4}) to the unique affine map $T(x)=5x \oplus 10$.
\end{theorem}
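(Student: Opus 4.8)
The plan is to build on the reduction already carried out in the paragraphs preceding the statement. Any affine $T$ satisfying (\ref{cpt}) with $K'\subset K$ must have the form $T=f\oplus w$ with $f\in\text{Aut}(\mathbb{Z}_{12})$ and $w\in\mathbb{Z}_{12}$; since $0\in K'$ and $T(K)=D$, the value $w=T(0)$ lies in $D$, forcing $w\in\{1,2,5,6,7,10,11\}$. Moreover $K'\subset K$, $T(K)=D$ and $K\cap D=\emptyset$ immediately give the weak counterpoint condition $T(K')\cap K'=\emptyset$ of (\ref{wcpt}). Note also that isometry is never an obstruction: by Corollary \ref{cor} and Example \ref{isom}, every $T=f_i\oplus w$ is automatically an isometry. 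So the first step is to enumerate all $T=f\oplus w$ with $T^2=Id$, $w\notin K'$, $T(K')\cap K'=\emptyset$, and then impose the full condition (\ref{cpt}).

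First I would isolate one decisive observation: under (\ref{cpt}) the map $T$ must be fixed-point free. Indeed, if $T(a)=a$ then $a$ lies in exactly one of $K,D$; but $T(K)=D$ and (since $T^2=Id$) $T(D)=K$ together force $a$ to lie in both, contradicting $K\cap D=\emptyset$. This observation is what separates the mere weak condition (\ref{wcpt}) from the full condition (\ref{cpt}), and it will eliminate several reflections that otherwise survive.

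Next I would run the four cases $f=f_1,\dots,f_4$, i.e. multiplication by $1,5,7,11$, reading off the admissible $w$ from $T^2=Id$ through Proposition \ref{ccpt}. Since $h^2\equiv 1\pmod{12}$ for every unit $h$, the condition $\varphi^2=Id$ is automatic and only the translation constraint $hw\oplus w\equiv 0\pmod{12}$ remains, giving $2w\equiv 0$ (so $w\in\{0,6\}$) for $f_1$; $6w\equiv 0$ (so $w\in\{0,2,4,6,8,10\}$) for $f_2$; $8w\equiv 0$ (so $w\in\{0,3,6,9\}$) for $f_3$; and no constraint for $f_4$. Intersecting each list with $w\in\{1,2,5,6,7,10,11\}$ and discarding every $w$ for which $T(K')\cap K'\neq\emptyset$ or $T$ has a fixed point, I expect exactly the two survivors $T(x)=5x\oplus 2$ and $T(x)=5x\oplus 10$. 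The most delicate bookkeeping is Case 4, where $T(x)=11x\oplus w=w\ominus x$ is a reflection: here values such as $w=2$ and $w=10$ do satisfy $T(K')\cap K'=\emptyset$, yet each has two fixed points and is killed by the observation above; the odd values $w\in\{1,5,7,11\}$ are instead killed directly by $T(K')\cap K'\neq\emptyset$.

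Finally, for each surviving transformation I would count the resulting partitions. Computing $T(K')$ shows that $K'\cup T(K')$ omits exactly one $T$-orbit of size two — namely $\{1,7\}$ for $T(x)=5x\oplus 2$ and $\{5,11\}$ for $T(x)=5x\oplus 10$. Writing $K=K'\cup\{z\}$ forces $D=T(K')\cup\{T(z)\}$, so $z$ and $T(z)$ must be the two elements of that omitted orbit, and the two choices of $z$ produce the partitions (\ref{p1}),(\ref{p2}) for $T(x)=5x\oplus 2$ and (\ref{p3}),(\ref{p4}) for $T(x)=5x\oplus 10$ — four in all, and manifestly distinct since their sixth elements are $7,1,5,11$. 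For the uniqueness clause $\exists!\,T_i$, I would note that since the two surviving transformations omit disjoint orbits, the transformation attached to one family never sends the consonant set of a partition in the other family to its dissonant set (e.g. $5x\oplus 10$ sends $7\in K_1$ to $9\notin D_1$). Hence each of the four partitions is realized by exactly one affine transformation. The main obstacle here is organizational — keeping the four automorphism cases and the modular arithmetic of the sets $T(K')$ straight — rather than conceptual, the one genuinely load-bearing idea being the fixed-point-freeness of $T$.
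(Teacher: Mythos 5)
Your proposal is correct and follows essentially the same route as the paper: reduce to $T=f_i\oplus w$ with $w=T(0)\in D$, constrain $w$ via $T^2=Id$ in each of the four automorphism cases, eliminate candidates by the weak condition $T(K')\cap K'=\emptyset$ and by fixed points, and then count the two extensions $K=K'\cup\{z\}$ from the omitted $T$-orbit for each of the survivors $5x\oplus 2$ and $5x\oplus 10$. The only differences are presentational — you promote fixed-point-freeness to an explicit up-front lemma and verify the uniqueness of $T_i$ per partition directly, both of which the paper handles implicitly inside its case analysis.
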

\begin{definition}\label{condition} i) We say a partition $(K,D)$ of $\mathbb{Z}_n$ satisfies the counterpoint condition whenever (\ref{cpt}) holds with respect to a unique affine $T:\mathbb{Z}_n\to\mathbb{Z}_n$.\\
ii) We say that an affine $T:\mathbb{Z}_n\to\mathbb{Z}_n$ satisfies the weak counterpoint condition if (\ref{wcpt}) holds with respect to the set $K'=\{0\}\cup S$ where $S$ is a symmetric, generating set of $\mathbb{Z}_n$. 
\end{definition}
\noindent We will study the (weak) counterpoint conditions in certain groups $\mathbb{Z}_n$ in Section \ref{systems}. The idea is to find $T$ which satisfy  (\ref{wcpt}), and similar to the proof above increase $K'$ to reach (\ref{cpt}).
\begin{remark}
Notice that the values $7$, $1$, $5$ and $11$ minimize the distance function over all path lengths $\leq 2$ in the unoriented Cayley graph. From the point of view of (\ref{cpt}), all four partitions in Theorem \ref{4p} should be valid to experiment counterpoint with. Although $K_3$ from (\ref{p3}) does justice to $5$ as a would be consonant, one can rule out partitions $(K_{2,3,4}, D_{2,3,4})$ if we minimize further. After imposing the set $K'$ be consonant, we will add $z\notin K'$ to $K$ only if (\ref{cpt}) holds and the path length of $z$ is minimum  with respect to the {\bf{oriented}} Cayley graph (in this graph, the path from $0$ to $9$ has length $3$, and this is why we have to minimize outside $K'$). Then, we would be led naturally to the original counterpoint partition from Definition \ref{cptd}. 
\end{remark}

\section{Sound and Tuning}\label{sound}
\noindent In this section, we present a bare minimum needed to understand how the various concepts from the previous sections can be implemented practically and be recognized when music is produced. Aiming at an efficient simplicity, we avoid going deeper into topics that explain sound through Fourier analysis, differential equations or physical characteristics of musical instruments. The mathematical environment we need here is one-dimensional. In software implementations (we use Maple), a one dimensional array encoding a periodic function is sampled within a time interval and transformed into an audio file. 
\begin{definition}\label{sw} Let $f_0>0$ be a real number, $k$ a non-negative integer, and  $r:=\sqrt[12]{2}$. The function $f:[0,1]\to [-1,1]$, $f(t)=\sin (2\pi f_0 t)$ is a sound wave of frequency $f_0$.   
 The pitch $k$ is the frequency $r^k f_0$ of the sound wave $f_k(t):=\sin (2\pi r^k f_0 t)$.
\end{definition}
\begin{remark} The above sequence of pitches in geometric progression, is called ``equal temperament" in Western music. The frequency ratio between consecutive pitches is constant $r=\sqrt[12]{2}=2^{1/12}$. Obviously, the constant $r$ was chosen so that the chromatic scale displays double frequency length $2f_0$. The starting frequency $f_0$ is usually chosen so that the piano middle $A$ note plays at $440$ Hz frequency. Temperate tuning has the advantage that a song played within a key sounds similar when shifted to another key. Mod $12$ equivalent pitches represent the same note, and their frequency ratio is a power of $2$. The functions $f_k$ model the so-called pure tones. We have made the choice to use the sinus function to model the pure tones, but any periodic function would suffice to implement the musical theory concepts we are concerned with. 
\end{remark}
\noindent Throughout its history, Western music has invented many flavors of tuning. For example, in Pythagorean tuning the ratio between two consecutive pitches is not constant, though this type of tuning is a mathematical approximation, sometimes coarse, of the temperate one (as is the so-called just tuning). Pythagorean tuning presets the values of the fourth and the fifth intervals at $\frac{4}{3}$ and $\frac{3}{2}$, respectively; also, the minor second has frequency $256/243$ which is $\approx 2^{1/12}$; the major second is measured at $9/8\approx 2^{2/12}$; and so on, the interval $i$ is represented by a ratio in the form $2^p3^q$ which roughly approximates   $2^{i/12}$. However, these approximations imply larger errors will accumulate when shifting the notes to other octaves. Computers obviously, approximate irrational number frequencies such as $2^{1/12}$, with rational numbers; nevertheless, the algorithms used yield far better approximations than those based on representation in ratios $2^p3^q$ (Pythagorean) or $2^p3^q5^t$ (just), with integers $p,q,t$.    
For details on how the Pythagorean and just tuning ratios are obtained by some elementary algebra manipulations, see \cite{Pi}. 
\begin{center}
  \begin{tabular}{ | l | l | l | l | }
    \hline
    n=Note  & Interval $[0,n]$   & Pythagorean frequency & Temperate frequency\\ \hline
    0=C    & unison or perfect eighth    & 1   &1                       \vspace{0.02in}  \\
    $1=C\sh$ &  minor second & $256/243$ & $2^{1/12} $ \vspace{0.02in}  \\ 
$2=D$ & major second &  $9/8$ & $ 2^{2/12}$ \vspace{0.02in}  \\
$3=D\sh$ &  minor third & $32/27$ & $2^{3/12}$ \vspace{0.02in}  \\
$4=E$ &  major third  & $81/64$ & $2^{4/12}$  \vspace{0.02in}  \\
$5=F$ & fourth & $4/3$ & $2^{5/12}$ \vspace{0.02in}  \\
$6=F \sh$ & tritone & $729/512$ & $2^{6/12}$  \vspace{0.02in}  \\
$7=G$ & fifth  & $3/2$ & $2^{7/12}$  \vspace{0.02in}  \\
$8= G\sh$ & minor sixth & $128/81$ & $2^{8/12}$  \vspace{0.02in}  \\
$9=A$  & major sixth & $27/16$ & $2^{9/12}$  \vspace{0.02in}  \\
$10= A\sh$  & minor seventh & $16/9$ & $2^{10/12}$  \vspace{0.02in}  \\
$11= B$  & major seventh & $243/128$ & $2^{11/12}$ \\    \hline
  \end{tabular}
\end{center}
To create a pure tone to be played $t$ seconds at frequency $f$, we use the Maple code below, which samples the function $\sin{(2\pi f x )}$ at $44100$ values per second in $[0, t]$.
\begin{lstlisting}
with(AudioTools):
Tone := proc(f,t)
 local x, final;  
 final:= Create( (x) -> evalhf(sin(x/44100*2*Pi*f)), duration=t);
 return final:
end proc:
\end{lstlisting}
Temperate tuning is implemented recursively, with each note $k$  encoded as Tone$(f_0 r^k, t)$, where $r=2^{1/12}$. We discard here mod $12$ equivalence of notes because we want to have access to as many octaves as possible. In practice, computers and instruments are subject to physical limitations. In our Maple code, we improve notes by multiplying pure tones with a so-called ``attack-decay-sustain-release" envelope function $g(x)$, and by adding modulation (shift with variable phase). Thus, the function that encodes note $k$ is of the form $\text{note}_k(x)=g(x)\cdot f_k(x+f_k(x))$, with $f_k$ from Definition \ref{sw}. Finally, major/minor chords are created by a weighted average of the notes within the chord.  

\section{Musical Systems in $\mathbb{Z}_n$}\label{systems}
\noindent  In this section, we extend temperate tuning while keeping it coupled with the Cayley graph structure of the group $Z_n$, when $n$ displays a suitable factorization. We divide the space of a generalized octave in equal frequency intervals. This procedure is not new, although most examples are based on splitting up the pitch space in a multitude of ratios (see \cite{Gan} and references therein for non-twelve divisible equal temperaments, and \cite{Bo}, \cite{Pie} for different length octave, a ``tritave"). What we add, in essence, is the geometric group structure of the group $\mathbb{Z}_n$ with a suitable Cayley graph. This mix allows us to construct compatible chords, scales, circle of fifths and first species  counterpoint partitions.  

\begin{definition} A musical system is a pair $(\mathbb{Z}_n, s)$, where $n\in\mathbb{N}$, $n\geq $ and $s$ is a positive real number, $s>1$, with equidistant frequency intervals $[s^{i/n}, s^{(i+1)/n}]$,  $i=0,..., n-1$.  The pure tones attached to the musical system are defined by $f_i(t)=\sin (2\pi f_0 s^{i/n} t)$, for a fixed frequency $f_0>0$. 
\end{definition}
\begin{remark}
 Obviously for $n=12$ and $s=2$ one obtains the temperate $12$ tone chromatic scale. For $n=13$ and $s=3$ we obtain the Bohlen-Pierce ``tritave", however, we have no further analogy because $13$ is prime; we attach the $\mathbb{Z}_n$ group structure and its Cayley graph features to a musical system when $n$ is a product of two relatively prime numbers. We will restrict to this particular class of integers while keeping the octave length $s>1$  arbitrary.   
\end{remark}
\begin{theorem}\label{mt1}
Let $p,q \in \mathbb{N}$ with gcd $\{p,q\}=1$, and $n=pq$.
Then $\langle p,q\rangle = \langle p, n-q \rangle = \langle q,n-p\rangle = \langle n-p,n-q\rangle = \mathbb{Z}_n$
\end{theorem}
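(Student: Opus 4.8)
The plan is to reduce all four equalities to the single claim $\langle p,q\rangle=\mathbb{Z}_n$ and then settle that one by Bézout's identity. First I would record a general observation valid in the abelian setting: since $\mathbb{Z}_n$ is abelian, every word over the alphabet $\{p,q,\,n-p,\,n-q\}$ and their inverses collapses to an expression of the form $(ap+bq)\bmod n$ for integers $a,b$. Thus the subgroup generated by a set of elements of $\mathbb{Z}_n$ is exactly the set of integer combinations of those elements reduced modulo $n$.

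The key reduction is the remark that $n-q=\ominus q$ is precisely the inverse of $q$ in $(\mathbb{Z}_n,\oplus)$, and likewise $n-p=\ominus p$ is the inverse of $p$. Because a subgroup is automatically closed under taking inverses, any generating set containing $q$ also contains $n-q$, and conversely; the same holds for $p$ and $n-p$. Consequently the four sets $\{p,q\}$, $\{p,n-q\}$, $\{q,n-p\}$ and $\{n-p,n-q\}$ generate one and the same subgroup of $\mathbb{Z}_n$, so it suffices to prove $\langle p,q\rangle=\mathbb{Z}_n$.

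For that final step I would invoke the hypothesis gcd$\{p,q\}=1$ together with Bézout's identity: there exist integers $a,b$ with $ap+bq=1$. Reducing modulo $n$ and using the observation above, this shows $1\in\langle p,q\rangle$; since $1$ has order $n$ and hence generates all of $\mathbb{Z}_n$, we conclude $\langle p,q\rangle=\mathbb{Z}_n$. I note in passing that the extra hypothesis $n=pq$ is not strictly needed for the argument (it follows automatically that gcd$\{p,q,n\}=1$); it merely fixes the ambient order and guarantees that $p,q$ are nontrivial elements, matching the generator pairs used earlier such as $\mathbb{Z}_{12}=\langle 3,4\rangle$.

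There is no serious obstacle here. The only point deserving a moment's care is the first observation, namely that in the abelian setting membership in the generated subgroup is equivalent to being an integer linear combination modulo $n$; this is exactly what lets Bézout's identity over $\mathbb{Z}$ transfer to $\mathbb{Z}_n$. Once it is in place, the inverse remark disposes of all four equalities simultaneously, and the $\gcd$ computation finishes the proof.
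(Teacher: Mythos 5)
Your proof is correct, and it takes a genuinely different route from the paper's. The paper dispatches the theorem in one line by citing Gallian's result that $\mathbb{Z}_a \times \mathbb{Z}_b \cong \mathbb{Z}_{ab}$ exactly when $\gcd\{a,b\}=1$, leaving implicit both why that isomorphism makes $\{p,q\}$ a generating set and why the other three pairs work as well. You instead give a self-contained argument: the observation that $n-p=\ominus p$ and $n-q=\ominus q$ are the group inverses of $p$ and $q$ collapses all four generating sets to a single subgroup (since subgroups are closed under inverses), and Bézout's identity then places $1$ in $\langle p,q\rangle$, which forces equality with $\mathbb{Z}_n$. Your version is more elementary and, frankly, more complete --- the inverse remark handles the four variants uniformly, which the paper's citation does not address at all --- and your aside that the hypothesis $n=pq$ is inessential to the generation claim is accurate (it serves only to ensure $p$ and $q$ are nonidentity elements and to support the later circle-of-fifths construction, where $n=pq$ genuinely matters). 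The one thing the paper's approach buys is a pointer to the direct-product decomposition $\mathbb{Z}_p\times\mathbb{Z}_q\cong\mathbb{Z}_n$ that motivates viewing $p$ and $q$ as independent ``axes''; your Bézout argument proves strictly what is claimed without that structural detour.
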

\begin{proof}
This is a simple consequence of Theorem 8.2 and its Corollary 2 in \cite{Ga}:  $\mathbb{Z}_a \times \mathbb{Z}_b \cong \mathbb{Z}_n$ if and only if $n = ab$ and gcd$\{a,b \}=1$
\end{proof}
\noindent The theorem above allows us to define chords. According to the theorem, there are four generator pairs, and at this point each one could be used to define chord sequences. We will need the relatively prime setup for the circle of fifths existence, hence the generator set $\{p,q\}$ is most suitable.   
\begin{definition} Let $p,q \in \mathbb{N}$ with gcd $\{p,q\}=1$, $n=pq$, and assume $p>q$. For $x\in\mathbb{Z}_{n}$ the $x-$major triad chord is the path $x\to x+p\to x+p+q$. The $x-$minor triad chord is the path $x\to x+q\to x+q+p$.  
More generally, an $x-$major (minor) chord is a path $x_1\to x_2\to\cdots \to x_k$ in the oriented Cayley graph such that $x=x_1$ and:  \\
i) $x_{2}=x_{1}\oplus p$ ( $x_{2}=x_{1}\oplus q$ for minor )\\
ii) $x_{3}=x_{2}\oplus q$ ( $x_{3}=x_{2}\oplus p$ for minor )\\
iii) $x_{i+1}\ominus x_i\in\{p,q\}$, $\forall$ $i=1,...,k$\\
iv) the path is non-self intersecting unless $x_k=x_1$.\\
An $x$ chord $x_1=x\to x_2\to ...\to x_k$ is said to be within the octave if 
$\displaystyle{\sum_{i=1}^{k-1} x_{i+1}\ominus x_i\leq n}$. 
\end{definition}
\begin{example}
In $Z_{10}=\langle 2,5\rangle $ the $0-$major triad is the path $0\to 5\to 7$. In $\mathbb{Z}_{15}=\langle 3,5\rangle $ the path $0\to 5\to 8\to 13$ represents a $0-$major chord. In $\mathbb{Z}_{20}=\langle 4,5\rangle $, the $0-$minor triad is the path $0\to 4\to 9$. We warn the reader that for different values $n$, notes labelled by the same symbol are not equivalent and do not sound the same. We start by convention with $0$ representing the same frequency sound in all systems $(\mathbb{Z}_n, s)$. With $s$ kept fixed, a note $k$ in $(\mathbb{Z}_{n}, s)$ encodes a sound wave at frequency $s^{k/n}$, whereas the same $k$ in a system $(\mathbb{Z}_{m}, s)$ encodes a sound wave at frequency $s^{k/m}$. Hence, the $0-$ major triad $0\to 5\to 7$ of $(\mathbb{Z}_{10}, 2)$ and the $0-$major triad $0\to 5\to 8$ of $(\mathbb{Z}_{15}, 2)$ have only the sound of note $0$ in common, whereas the sound of $k=5$ will depend on whether $n=10$ or $n=15$.
\end{example}
\begin{example}
In $\mathbb{Z}_{12}$ the $C$ major triad $0\to 4\to 7$ is within the octave. The largest $C$ major chord with this property is $0\to 4 \to 7\to 11$, i.e. the $C$ major $7^{\text{th}}$ chord. The $C$ major $9^{\text{th}}$ chord, $0\to 4 \to 7\to 11\to 2$ is not within the octave because $4+3+4+3>12$. In $\mathbb{Z}_{15}=\langle 3,5\rangle$, the minor chord $1\to 4\to 9\to 12 $ is the largest $1-$minor chord within the octave. Notice that $1\to 4\to 9\to 12\to 2 $ is also a $1-$minor chord, but not within the octave because $3+5+3+5>15$. We will use the ``largest major/minor chord within an octave" concept later, to define major/minor scales. 
\end{example}

\begin{remark}
  One can consider a more general factorization $n= p_1 p_2\dots p_k$ in pairwise relatively prime factors. However, one may encounter issues with duplicate generating sets, such as $Z_{30}=\langle 2,3,5 \rangle =\langle 5,6\rangle$. Another issue might arise in the circle of fifths construction, for which we would need $n$ and $p_1+p_2+...+p_k$ to be relatively prime. One can still implement chords based on multiple generators and explore more means of musical expression. For example, with generators $\{2,3,5\}$ one obtains a variety of chords that contains mixed flavors of major and minor ones.  Such cases can be coupled with a longer octave length, so that the $30$ notes within the scale are well-spaced out.     
\end{remark}

\noindent Next theorem justifies a construction of the generalized circle of fifths. Provided $n$, $p$ and $q$ are chosen as in the theorem,  starting with $0$ and adding $p+q$-steps, all elements of $\mathbb{Z}_n$ lie  on a circle. In other words $\langle p+q \rangle = \mathbb{Z}_n$. Moreover, the circle is not ``trivial" to the right, in the sense that, clockwise the circle does not display the elements of $\mathbb{Z}_n$ consecutively. In \cite{CM}, generalized circles of fifths are defined for each $k\in\mathbb{Z}_n$ such that gcd$\{k,n\}=1$. Our version obviously corresponds to $k=p+q$. 
\begin{theorem}\label{mt2}
Let $p,q \in \mathbb{N}$ with gcd $\{p,q\}=1$, $p,q >1$, and $n=pq$. We have:  \\
 gcd $\{ p+q, n \} =1$ and $(p+q)\text{ mod }n = p+q \neq 1$.
\end{theorem}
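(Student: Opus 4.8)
The plan is to establish the two claims separately, since they are logically independent. For the inequality $(p+q) \bmod n = p+q \neq 1$, I would first observe that since $p, q > 1$, we have $p + q \geq 2 + 3 = 5$ (the smallest coprime pair greater than $1$ being, say, $\{2,3\}$), and more to the point $p+q \geq 4$, so certainly $p+q \neq 1$. To justify that the mod-$n$ reduction is vacuous, I would show $p+q < n = pq$. This reduces to the elementary inequality $pq - p - q > 0$, equivalently $(p-1)(q-1) > 1$, which holds whenever $p, q \geq 2$ with at least one of them $\geq 3$; since $p$ and $q$ are coprime and both exceed $1$, they cannot both equal $2$, so indeed $(p-1)(q-1) \geq 1 \cdot 2 = 2 > 1$. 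Hence $p + q < pq = n$, so $(p+q) \bmod n = p+q$, and the second claim follows.

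For the coprimality claim $\gcd\{p+q, n\} = 1$ with $n = pq$, the strategy is to rule out every prime divisor of $n$ as a common factor. Suppose a prime $r$ divides both $p+q$ and $pq$. Since $r \mid pq$ and $r$ is prime, Euclid's lemma gives $r \mid p$ or $r \mid q$; without loss of generality say $r \mid p$. Combined with $r \mid (p+q)$, subtraction yields $r \mid q$. But then $r$ is a common divisor of $p$ and $q$, contradicting $\gcd\{p,q\} = 1$. Therefore no prime divides both $p+q$ and $n$, which is exactly the statement $\gcd\{p+q, n\} = 1$.

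I do not anticipate a serious obstacle here, as both parts are direct number-theoretic arguments. The only mild subtlety is the inequality $p+q < pq$: it genuinely requires the hypothesis that $p$ and $q$ are not both equal to $2$, which is guaranteed precisely by coprimality together with $p, q > 1$. I would make sure to flag this dependence explicitly, since dropping the $p, q > 1$ hypothesis (or allowing $p = q = 2$, which is not coprime anyway) would break the clean conclusion $(p+q) \bmod n = p+q$. The coprimality part, by contrast, needs only $\gcd\{p,q\} = 1$ and the primality argument via Euclid's lemma, and goes through without any size restriction on $p$ and $q$.
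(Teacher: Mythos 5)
Your proof is correct and follows essentially the same route as the paper's: both arguments show that a common divisor of $p+q$ and $n=pq$ would force a common divisor of $p$ and $q$, contradicting $\gcd\{p,q\}=1$, and both settle the second claim via the inequality $1 < p+q < pq$. Your reduction to a prime common divisor and appeal to Euclid's lemma is a cleaner execution than the paper's case analysis on an arbitrary divisor $k$, and your explicit check that $(p-1)(q-1) > 1$ (using that $p$ and $q$ cannot both equal $2$) supplies the detail the paper dismisses as obvious.
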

\begin{proof} Let $k \mid n$ and $k \mid p+q$. We must show $k=1$. Since $n=pq$, $k \mid pq$. Assume by contradiction, $k>1$. Because we know gcd$\{p,q\}=1$, either $k \mid p$ or $k \mid q$ or $\exists$ $p'$, $\exists$ $q'$ prime number divisors of $p$ and $q$, respectively, such that $p'q'\mid k$ (to see this, use the unique prime number factorization of $p$ and $q$). In either of these three cases, because $k\mid p+q$, we would find a common divisor of both $p$ and $q$, which contradicts gcd$\{p,q\}=1$.
The second relation is obvious because $1<p+q<pq$.
\end {proof}
\begin{definition}
The ordered sequence $\mathcal{C}_n:=[i\odot (p+q)]_{i=0}^{n}$ is called circle of fifths with respect to the chromatic scale $\mathbb{Z}_{n}$.    
\end{definition}

\begin{remark} 
In Section \ref{westmusic} we have drawn examples of such circles.   
The circle may be ``trivial" in certain cases, i.e. the sequence $\mathcal{C}_n$ is made of all $\mathbb{Z}_n$'s elements in consecutive order. For example, $\mathbb{Z}_{6}=\langle 3,4 \rangle$ and $(3+4 ) \text{mod } 6= 1$, hence the circle of fifths based on these generators is trivial. It is also possible that other pairs of generators define non-trivial circles: e.g. both cases $\mathbb{Z}_{10}=\langle 2, 5\rangle$ and $\mathbb{Z}_{10}=\langle 8, 5\rangle$ define non-trivial circles of fifths and one can choose either one to define chords.
\end{remark}
\noindent Next we propose a definition of scales in $\mathbb{Z}_n$ akin to Definition \ref{scale}. Recall that we are working in the case $n=pq$ and gcd$\{p,q\}=1$. Without loss of generality, say $p>q$. We notice $p$ and $q$ cannot be both even. However, there are three cases to distinguish depending on whether the pair $(p,q)$ is (odd, even), (odd, odd) or (even, odd). The cases (odd, even) and (even, odd) are not symmetric, due to the major/minor flavor of a scale.

\begin{definition} Let $n=pq$ with gcd$\{p,q\}=1$ and $p>q$. \\
a) Assume first that $p=2k$ and $q=2l+1$. We say that $p$ displays the $'22...2'$ pattern (there are $k$ occurrences of the digit $2$) and $q$ displays the $'22..21'$/ $'12..22'$ pattern (there are $l$ occurrences of the digit $2$ and one of the digit $1$). Let $x\in\mathbb{Z}_n$. \\
i) The $x$-major scale is the sequence $(x_i)_{i=1}^t\subset\mathbb{Z}_n$ such that 
\begin{itemize}
    \item  $x_1=x$, $x_t=x$ and the sequence contains the largest $x-$major chord within the octave.
    \item for any consecutive notes $x_i\to x_j$ of the above $x$-major chord, we have  \\ $x_{i+1}=x_i\oplus 2$,..., $x_{j-1}=x_{j-2}\oplus 2$,   
    $x_j=\begin{cases} 
    x_{j-1}\oplus 2& \text{ if } \quad x_j\ominus x_i=p\\
    x_j=x_{j-1}\oplus 1 & \text{ if } \quad  x_j\ominus x_i=q
    \end{cases}$\\
 (i.e. between consecutive notes of the chord, the patterns are either $'22...2'$ or $'22..21'$).
\end{itemize}
ii) The $x$-minor scale is defined similarly, by considering the largest $x$-minor chord within the octave, and by filling the scale with alternating $'22..21'$/ $'12..2'$ patterns. \\
b) If $p=2k+1$ then the scales are defined similarly, by using the $'\underbrace{22...2}_{k}1'$ pattern for $p$. For the minor chord, if $q=2l+1$ then use its pattern $'\underbrace{22...2}_{l}1'$ without alternating it with $'1{\underbrace{2...22}_{l}}'$; if $q=2l$ then use its $'22...2'$ pattern instead.   
\end{definition}
\noindent The definition above mimics and generalizes  the alternation patterns observed in the major/minor scales of $\mathbb{Z}_{12}$. The purpose of extra alternation in the minor scale (when $p$ is even and $q$ odd), or the lack of it (when $p$ is odd and $q$ is odd) is to avoid $'11'$ occurrences within the scale. We illustrate the definition with a few examples of $0-$major/minor scales. By translation, one can obtain all $x$ scales within a musical system.

\begin{example}  For $\mathbb{Z}_{10}=\langle 2, 5\rangle$, $p=5$, $q=2$. The patterns are $p=\text{}'221'$ and $q=\text{}'2'$. $0\to 5\to 7$ is the largest $0-$major chord within the octave. In between $x_1=0$ and $x_4=5$,
 we fill in $x_2=2$, $x_3=4$, hence the leg $x_1\to x_4$ satisfies the $'221'$ pattern. The leg $x_4=5\to x_5=7$ satisfies the $q$ pattern by default. Because the scale closes with $x_t=0$, we obtain the $0-$major scale as the sequence $x_1=0, x_2=2, x_3=4, x_4=5, x_5=7, x_6=0$. The $0-$minor scale is built on the minor chord $0\to 2\to 7$. Adding notes using the definition, we obtain the sequence $0,2,4,6,7,0$. 
 \end{example}
 
 \begin{example} For $\mathbb{Z}_{15}=\langle 3, 5\rangle$, $p=5$, $q=3$. The patterns are $'221'$ for $p$ and $'21'$ for $q$ (without $'12'$ because $p$ is odd). The largest $0-$major chord within the octave is $0\to 5\to 8\to 13$. The $0-$major scale is the sequence $0, 2, 4, 5, 7, 8, 10, 12, 13, 0$. The largest $0-$minor chord within the octave is $0\to 3\to 8\to 11$. Hence the $0-$minor scale is the sequence $0, 2, 3, 5, 7, 8, 10, 11, 0$. 
\end{example}

 \begin{example}
  For $\mathbb{Z}_{30}=\langle 5,  6\rangle$, $p=6$, $q=5$. The patterns are $'222'$ for $p$ and $'221'$/ $'122'$ for $q$. Note that the alternation in the $q$ pattern is used only in the minor scale. The largest $0-$major chord within the octave is  $0\to 6\to 11\to 17\to 22\to 28$. The $0-$major scale is the sequence $0,2,4,6, 8, 10, 11, 13$, $15, 17, 19, 21, 22, 24, 26, 28,  0$. The largest $0-$minor chord within the octave is $0\to 5\to 11\to 16\to 22\to 27 $. The $0-$minor scale is the sequence $0, 2, 4, 5, 7, 9, 11, 12, 14, 16, 18, 20, 22, 24, 26, 27, 0 $. Note that the legs of length $q=5$ are filled in by alternating $'221'$ with $'122'$.
\end{example}
\begin{remark}
In \cite{CM} scales are defined as sequences $x_i=[\frac{i\cdot k}{n}]$, $i=1,...,k$, where $k\in\mathbb{Z}_n$ is relatively prime to $n$. In our case $k=p+q$ is relatively prime to $n$, and we may define a similar scale. However, the major/minor alternatives will be lost because the major/minor chords on which the scales are built, are discarded by such a definition. 
\end{remark}
\begin{remark} In our context, triads such as $\{0,p, p+q \}$, $\{0,q, p+q \}$, $\{0,p, p+p \}$, $\{0,q, q+q \}$ can be used to construct hexatonics. Tonnetz spaces can be built using perpendicular axes for the generators $p$, $q$, and diagonal axes for the generalized circle of fifths and major/minor scales. For the $\mathbb{Z}_{12}$ case, see e.g. \cite{Hook}. 
\end{remark}

\noindent Aiming toward counterpoint, the two results below show that isometric reflections always exist in our set-up. 
\begin{proposition}\label{gwcp} 
 Let $n=pq$ with gcd$\{p,q\}=1$ and $\varphi:\mathbb{Z}_n\to \mathbb{Z}_n $ defined by $\varphi(x)=(n-1)\odot x$. Then $\varphi\in\text{ Aut }(\mathbb{Z}_n)$, $\varphi^2=Id$, and $\varphi$ is an isometry on the unoriented Cayley graph of $\mathbb{Z}_n$ with respect to any symmetric generating set $S$ of $\mathbb{Z}_n$. 
 \end{proposition}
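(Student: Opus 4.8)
The plan is to recognize that $\varphi$ is nothing but the group-inversion map in disguise, and then to quote Theorem \ref{iso} exactly as in Example \ref{refl}. The central observation is that $n-1 \equiv -1 \pmod n$, so that for every $x\in\mathbb{Z}_n$ we have
\[
\varphi(x) = (n-1)\odot x = \big((n-1)x\big)\bmod n = (-x)\bmod n = \ominus x .
\]
Thus $\varphi$ coincides with the inverse map $g\mapsto g^{-1}$ written in additive notation, and the whole proposition becomes the specialization of Example \ref{refl} to the group $\mathbb{Z}_n$. Everything that follows is then a matter of checking the three required properties against this identification.

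For the automorphism property I would argue in either of two equivalent ways. Since $n$ and $n-1$ are consecutive integers, $\mathrm{gcd}\{n-1,n\}=1$, so $n-1\in U(n)$; by the isomorphism $\mathrm{Aut}(\mathbb{Z}_n)\cong U(n)$ recalled in Example \ref{aut}, the map $x\mapsto (n-1)\odot x$ lies in $\mathrm{Aut}(\mathbb{Z}_n)$. Alternatively, one checks directly that inversion is a morphism, $\varphi(x\oplus y)=\ominus(x\oplus y)=(\ominus x)\oplus(\ominus y)=\varphi(x)\oplus\varphi(y)$, which holds precisely because $\mathbb{Z}_n$ is abelian, and that it is a bijection. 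For idempotency I would compute $(n-1)^2=n^2-2n+1\equiv 1\pmod n$, whence $\varphi^2(x)=(n-1)^2\odot x=x$; equivalently, $\ominus(\ominus x)=x$ gives $\varphi^2=\mathrm{Id}$ with no computation at all.

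The isometry claim is then immediate from the hypothesis that $S$ is symmetric. Since $S=S^{-1}$, applying $\varphi$ sends $S$ to
\[
\varphi(S)=\{\,\ominus s : s\in S\,\}=\{\,s^{-1} : s\in S\,\}=S^{-1}=S .
\]
Because $\varphi\in\mathrm{Aut}(\mathbb{Z}_n)$ and $\varphi(S)=S$, the implication (i)$\Rightarrow$(ii) of Theorem \ref{iso} yields that $\varphi$ is an isometry on the unoriented Cayley graph of $\mathbb{Z}_n$ with respect to $S$, and this holds for \emph{every} symmetric generating set as required. I do not anticipate a genuine obstacle here: once the identification $\varphi=(\,\cdot\,)^{-1}$ is made, each of the three assertions reduces either to a one-line modular computation or to the symmetry of $S$ together with Theorem \ref{iso}. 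The only point deserving care is to state the identification $n-1\equiv-1$ cleanly, since it is what lets the symmetry $S=S^{-1}$ transfer directly to $\varphi(S)=S$; everything else is routine.
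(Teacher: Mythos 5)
Your proposal is correct and follows essentially the same route as the paper: the paper likewise observes that $\gcd\{n-1,n\}=1$ gives $\varphi\in\mathrm{Aut}(\mathbb{Z}_n)$, identifies $\varphi(x)=\ominus x$ as the inversion map, and then invokes Example \ref{refl} (i.e.\ $S=S^{-1}$ plus Theorem \ref{iso}) together with Proposition \ref{ccpt}. You merely unfold those citations into explicit computations, which is fine.
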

 \begin{proof} Clearly gcd$\{n-1,n\}=1$, thus $\varphi\in\text{ Aut }(\mathbb{Z}_n)$. Notice that $\varphi$ is actually the reflection 
 $\varphi(x)=(nx-x)\text{ mod n}=\ominus x$, $\forall$ $x\in\mathbb{Z}_n$. Hence everything follows from Proposition \ref{ccpt} and Example \ref{refl}. 
 \end{proof}  
\begin{corollary}\label{wiso}
In the hypotheses of Proposition \ref{gwcp}, with arbitrary $w\in\mathbb{Z}_n$, the affine transformation $T:\mathbb{Z}_n\to \mathbb{Z}_n$ defined by $T(x)=\varphi(x)\oplus w$ is an isometry on the corresponding unoriented Cayley graph, and $T^2=Id$.     
\end{corollary}
\noindent We use the above corollary to show that the weak counterpoint condition from Definition \ref{condition} is available under mild restrictions.  
\begin{theorem}\label{last}   
 In the hypotheses of Proposition \ref{gwcp} and Corollary \ref{wiso}, with generating set $S=\{p,q, n-p, n-q\}$ and $K'=\{0\}\cup S$, the affine transformation $T(x)=(n-1)x\oplus w$ satisfies (\ref{wcpt}), for any $w\notin K'\oplus K'$.    
\end{theorem}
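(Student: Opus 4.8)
The plan is to verify directly the three requirements that constitute the weak counterpoint condition (\ref{wcpt}) for the stated map $T$. Two of them cost essentially nothing. Writing $\varphi(x)=(n-1)\odot x$, the map $T(x)=(n-1)x\oplus w$ is exactly $T(x)=\varphi(x)\oplus w$, so Corollary \ref{wiso} applies verbatim and hands us both $T^2=\mathrm{Id}$ and the fact that $T$ is an isometry on the unoriented Cayley graph of $\mathbb{Z}_n$ with respect to $S=\{p,q,n-p,n-q\}$. Crucially, the corollary allows an \emph{arbitrary} translation parameter, so no constraint on $w$ enters at this stage. Everything therefore reduces to the single remaining condition $T(K')\cap K'=\emptyset$, and this is where the hypothesis $w\notin K'\oplus K'$ must be invoked.

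For that condition, the first step is to record (as in Proposition \ref{gwcp}) that $\varphi$ is the reflection $\varphi(x)=\ominus x$, so that $T(x)=\ominus x\oplus w$ for every $x\in\mathbb{Z}_n$. I would then establish the clean equivalence
\begin{equation*}
T(K')\cap K'\neq\emptyset \iff w\in K'\oplus K'.
\end{equation*}
The forward direction: if $y\in T(K')\cap K'$ then $y=T(k)=\ominus k\oplus w$ for some $k\in K'$, whence $w=y\oplus k$ with both $y,k\in K'$, i.e. $w\in K'\oplus K'$. The reverse direction: if $w=a\oplus b$ with $a,b\in K'$, then $T(b)=\ominus b\oplus a\oplus b=a\in K'$, so $a\in T(K')\cap K'$. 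Taking the contrapositive, $w\notin K'\oplus K'$ forces $T(K')\cap K'=\emptyset$, which is exactly the claim (note that this also rules out fixed points of $T$ inside $K'$, since $T(k)=k$ would give $w=k\oplus k\in K'\oplus K'$).

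I do not expect a genuine obstacle here: the substance of the theorem is simply the observation that the sumset $K'\oplus K'$ is precisely the set of \emph{forbidden} translation parameters — a reflection $x\mapsto\ominus x\oplus w$ carries $K'$ off itself exactly when $w$ avoids all pairwise sums of elements of $K'$. The only points requiring care are the bookkeeping with $\oplus$ and $\ominus$ (solving $T(k)=k'$ for $w$ via $\ominus k\oplus k=0$) and, if one wants the statement to be non-vacuous, a short remark that admissible $w$ actually exist whenever $|K'\oplus K'|<n$; since $|K'|\le 5$ and hence $|K'\oplus K'|\le\binom{|K'|+1}{2}\le 15$, this holds for all but the smallest values of $n$.
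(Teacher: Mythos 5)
Your proof is correct and follows essentially the same route as the paper: both invoke Corollary \ref{wiso} for the isometry and involution properties and then verify $T(K')\cap K'=\emptyset$ via the contrapositive, deducing $w=y\oplus x\in K'\oplus K'$ from any common element. Your extra observations (the reverse implication making the condition an exact characterization, and the non-vacuousness remark) are additions beyond, but consistent with, the paper's argument.
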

 \begin{proof}
Due to the above proposition and its corollary, we have that $T$ is an isometry and $T^2=Id$. To achieve the last requirement in (\ref{wcpt}) we check the contrapositive, i.e. if $T(K')\cap K'\neq\emptyset$ then $w\in K'\oplus K'$. Indeed, if $x\in T(K')\cap K'$ then $\exists$ $y\in K'$ such that $T(y)=(n-y)\oplus w=x\in K'$, hence $w=y\oplus x\in K'\oplus K'$.
 \end{proof}   
\noindent This last theorem tells us that a strict inclusion $K'\oplus K'\subsetneq \mathbb{Z}_n$ is sufficient to obtain the weaker form of counterpoint (\ref{wcpt}). 
If $n$ is even, one can push (\ref{wcpt}) to (\ref{cpt}), see proof of Theorem \ref{4p} where $n=12$, and example below where $n=10$. If $n$ is odd, one cannot push (\ref{wcpt}) to (\ref{cpt}), however, one can enlarge $K'$ to a largest possible set of consonants $K$, disjoint from $T(K)=D$ such that $K\cup D$ covers all but one element of $\mathbb{Z}_n$, see example below with $n=15$. We will consider this statement for arbitrary $n$ in a future work.   
\begin{example}
For $\mathbb{Z}_{10}=\langle 2,5,8\rangle$ we have $K'=\{0,2,5,8 \}$ and $K'\oplus K'=\{ 0,2,3,4,5,6,7,8\}$. Hence, with $w\in\{1,9\}$ the affine transformation $T(x)=9x\oplus w$ is an isometry which satisfies $T^2=Id$ and $T(K')\cap K'=\emptyset$, by Theorem \ref{last}. Let us also note that the only non-trivial automorphism $\varphi$ of $\mathbb{Z}_{10}$ which satisfies $\varphi^2=Id$ is $\varphi(x)=9\odot x$ (check this using  $U(10)$). Now, for the choice $w=1$, using the same ideas as in Theorem \ref{4p}, one finds two partitions $(K_{1,2}, D_{1,2})$ of $\mathbb{Z}_{10}$ associated to affine $T(x)=9x\oplus 1$ satisfying $T(K_i)=D_i$, i.e. the counterpoint requirements in (\ref{cpt}) are met. These partitions are given by $K_1=\{0,2,5,8,4\}$ and $K_2=\{0, 2,5,8,7\}$. The choice $w=9$ gives two more counterpoint partitions, $(K_{3,4}, D_{3,4})$ of $\mathbb{Z}_{10}$ with respect to $T(x)=9x\oplus 9$. These are given by $K_3=\{0,2,5,8,6\}$ and $K_4=\{0, 2,5,8,3\}$.
\end{example}
\begin{example}
For $\mathbb{Z}_{15}=\langle 3,5,10,12 \rangle$ we have $K'=\{0,3,5,10,12 \}$ and \\
$K'\oplus K'=\{ 0, 3, 5, 10, 12, 6, 8, 13, 2, 7,9 \}$. Hence, for any $w\in\{1,4,11,14 \}$ the affine transformation $T(x)=14x\oplus w$ is an isometry which satisfies $T^2=Id$ and $T(K')\cap K'=\emptyset$ by Theorem \ref{last}. Because $15$ is odd, it is not possible to obtain the counterpoint condition precisely, but in each valid case for $w$ we can ``push" $K'$ toward a bigger set of consonants $K\subset\mathbb{Z}_{12}$. Note in this case one can increase $K'$ with no more than two intervals because $|K'|=5$, and we want $K'\subset K$, $T(K)\cap K=\emptyset$ with bijective $T$; thus $|K|+|T(K)| = 2|K| < 15$. We set out to find two more values $z_1\neq z_2\in \mathbb{Z}_{15}\setminus K'$ such that their images $T(z_1)\neq T(z_2)$ belong to $\mathbb{Z}_{15}\setminus K'\cup \{ z_1, z_2\}$. With $w=1$ for example, $T(x)=14x\oplus 1$, and we are looking to find sets $K:=K'\cup \{z_1, z_2\}$ and $D:=T(K)=\{1,13,11,6,4\}\cup\{T(z_1), T(z_2)\}$ such that $K\cap D=\emptyset$. Hence we need find which $z_1\neq z_2\in\{ 2,7,8,9,14\}$ meet the requirement. Because $T(8)=8$ we eliminate $z_{1,2}=8$. If $z_1=2$ then $T(z_1)=14$ and any of the cases $z_2\in\{7,9\}$ will do. For example with $z_2=7$, $T(z_2)=9$ and the disjoint sets  $K=\{0,3,5,10,12,2,7\}$ and $D=\{1,13,11,6,4,14, 9\}$ act as a consonant/dissonant pair in $\mathbb{Z}_{15}$, extending the weak counterpoint condition. Let us  note that there are two non-trivial automorphisms $\varphi$ of $\mathbb{Z}_{15}$, other than $\varphi(x)=14\odot x$, which satisfy $\varphi^2=Id$, namely $\varphi_1(x)=4\odot x$ and $\varphi_2(x)=11\odot x$. One can check that condition i) in Theorem \ref{iso} is satisfied to conclude that both $\varphi_{1,2}$ are isometries. Hence, these can also be used to test for what $w\in\mathbb{Z}_{15}$ the affine map   $T=\varphi_{1,2} \oplus w$ achieves $T(K')\cap K'=\emptyset$, i.e. the weak counterpoint condition. 
\end{example}

\section{Conclusion}
\noindent We have interpreted various concepts from music theory through the lens of the Cayley (un)oriented graphs associated to the group $\mathbb{Z}_{12}$. Using the Cayley graph as a guiding principle, we have defined and studied chords, scales, circle of fifths, and first species counterpoint partitions in the setting of a group $\mathbb{Z}_n$ generated by two relatively prime numbers $p$ and $q$ such that $n=pq$. We have written Maple code to implement  and practically experience these concepts.

\begin{acknowledgements} 
We would like to thank Catalin Georgescu, Connor Gibbs, Amy Laursen, and Paul Lombardi for very useful conversations and suggestions which greatly improved our paper. The first named author also thanks Dan Van Peursem for crucial help with building two cigar box guitars one of which is tuned to the notes and chords of the musical system  $(\mathbb{Z}_{15}, 2)$. 
This work was partially supported by a UDiscover grant from the University of South Dakota (to Olivia Roberts).

\end{acknowledgements}

\end{document}